 \newtheorem{thm}{Theorem}[section]
 \newtheorem{lem}{Lemma}[section]
 \newtheorem{prop}{Proposition}[section]
 \theoremstyle{definition}
 \newtheorem{defn}{Definition}[section]
 \newtheorem{rem}{Remark}[section]
 \numberwithin{equation}{section}
\def\vep{\varepsilon}
\def\wt #1{\widetilde{#1}}
\def\ov #1{\overline{#1}}
\def\i1n{i=1,\cdots,n}
\def\j1n{j=1,\cdots,n}
\def\ij1n{i,j=1,\cdots,n}
\def\R{\mathbb R}
\newcommand{\be}{\begin{equation}}
\newcommand{\ee}{\end{equation}}
\newcommand{\beq}{\begin{equation*}}
\newcommand{\eeq}{\end{equation*}}
\title{Output feedback stabilization for a scalar conservation law \\
with a nonlocal velocity}
\author{
Jean-Michel Coron\thanks{Institut universitaire de France and
Universit\'{e} Pierre et Marie Curie, Laboratoire
Jacques-Louis Lions, 4 place Jussieu, F-75005 Paris, France. E-mail:
{\tt coron@ann.jussieu.fr}. }
\ and Zhiqiang Wang\thanks{Shanghai Key Laboratory for Contemporary
Applied Mathematics and School of Mathematical Sciences, Fudan
University, Shanghai 200433, China. E-mail: {\tt wzq@fudan.edu.cn}.
} %%
}
\date{\empty}
\begin{document}
%
%----------------------------------------------------------------------------------------------------------title
%

\maketitle
\begin{abstract}
In this paper, we study the output feedback stabilization for a
scalar conservation law with a nonlocal velocity, that models a highly
re-entrant manufacturing system as encountered in semi-conductor
production. By spectral analysis, we obtain a complete result on the
exponential stabilization for the linearized control system.
Moreover, by using a Lyapunov function approach, we also prove the
exponential stabilization results for the nonlinear control system
in certain cases.

\end{abstract}
{\bf Keywords:}\quad Conservation law, nonlocal velocity,
output feedback, stabilization.\\
{\bf 2010 Mathematics Subject Classification:}\quad
         35L65,  %% PDE, hyperbolic type, conservation laws
%         93B05,  %% Controllability
         93C20,  %% Sys&Contrl, Sys governed by PDEs [see also 35B37]
         93D15.  %% Stabilization of systems by feedback
\section{Introduction}

In this paper, we study the scalar conservation law
  \be \label{eq}
   \rho_t(t,x)+(\rho(t,x)\lambda(W(t)))_x=0,
   \quad t\in (0,+\infty), x\in (0,1), \ee
where
 \be \label{W} W(t)=\int_0^1\rho(t,x)dx. \ee
We assume that the velocity function $\lambda $ is in 
$C^1(\mathbb{R};(0,+\infty))$. Let us  recall that the
special case
\beq \lambda(s)=\frac{1}{1+s} ,\quad s\in [0,+\infty),  \eeq
was, for example,  used in \cite{Armbruster06, LaMarca}.

In the manufacture system, the initial data is given as
 \be \label{IC} \rho(0,x)=\rho_0(x),  \quad x\in (0,1). \ee
For this control system, the control  is the influx
  \be \label{influx}
     u(t):= \rho(t,0)\lambda(W(t)),  \ee
and the measurement is the outflux
 \be \label{outflux}
 y(t):=\rho(t,1)\lambda(W(t)). \ee
We consider the following  output feedback law
 \be \label{feedback}
  u(t)-\ov\rho \lambda(\ov \rho)=k( y(t) -\ov\rho \lambda(\ov \rho)),
  \quad t\in(0,+\infty), \ee
in which $k\in \mathbb{R}$ is a tuning parameter and $\ov \rho \in \mathbb{R}$ is
the equilibrium that we want to stabilize when time $t$ goes to
$+\infty$.

The conservation law that we study here is used to model the
semiconductor manufacturing systems, see e.g.
\cite{Armbruster06, Herty07, LaMarca}.
These systems are characterized
by their highly re-entrant feature with very high volume (number of
parts manufactured per unit time) and very large number of
consecutive production steps as well. The main character of this
partial differential equation model is described in terms of the
velocity function $\lambda$ which is a function of the total mass
$W(t)$ (the integral of the density $\rho$).

The control problems for conservation laws and general hyperbolic
equations/systems have been widely studied. The controllability of nonlinear hyperbolic
equations (or systems) are studied in \cite{Coron, CGWang, 2003-Gugat-Leugering,
LiBook09, LiRao, Wang} for solutions without shocks, and in
\cite{2012-Adimurthi-Ghoshal-Gowda, 2011-Adimurthi-Ghoshal-Gowda, 2005-Ancona-Coclite, Ancona98, 2002-Bressan-Coclite, 2007-Glass, 1998-Horsin, 2012-control-Perrollaz}
for solutions with shocks. As for asymptotic stability/stabilization of hyperbolic equations (systems),
two main strategies have been used. The first one relies on a careful analysis of
the evolution of the solution along the characteristic curves; see in particular
\cite{Ancona07, 2002-Bressan-Coclite,1984-Greenberg-Li, LiBook94, 2012-stab-Perrollaz, 2008-Prieur-Winkin-Bastin}.
The second one relies mainly on a Lyapunov function approach;
see, in particular, \cite{CoronBook, CoronBastin, CoronABastin07, DiagneCoron, DickGugat11,
2011-Gugat-Dick, XuCZ09, XuCZ02}.

Concerning the manufacturing model of \eqref{eq} itself, an optimal
control problem, motivated by  \cite{Armbruster06, LaMarca},
related to the \emph{Demand Tracking Problem} was
studied in \cite{CKWang} (see also \cite{SWang} for a generalized
system where $\lambda=\lambda(x,W(t))$). The objective of that
optimal control problem is to minimize, by choosing influx
$u(t)=h(t)$ instead of \eqref{feedback}, the $L^p$-norm ($p\geq 1$)
of the difference between the actual out-flux $y(t):=\rho(t,1)
\lambda(W(t))$ and a given demand forecast $y_d(t)$ over a fixed
time period. This is an open-loop control system. Another related
work \cite{CHerty09}, which is also motivated in part by
\cite{Armbruster06, LaMarca}, addressed well-posedness for systems
of hyperbolic conservation laws with a nonlocal velocity in $\R^n$.
The authors studied the Cauchy problem in the whole space $\R^n$
without considering any boundary conditions and they gave a
necessary condition for the possible optimal controls.
In a recent paper \cite{CWang12}, controllability of solution and out-flux
for \eqref{eq} have been obtained by the same authors of this one.

In this paper, we study  the exponential stability  of the
manufacturing system \eqref{eq} under the feedback law
\eqref{feedback}. The problem of \emph{exponential stabilization}
can be described as follows: For any given equilibrium $\ov \rho
\in \mathbb{R}$ and any initial data $\rho_0$, does there exist $k \in \mathbb{R}$
such that $\ov \rho$ is exponentially stable for the closed-loop
control system \eqref{eq}, \eqref{IC} together with
\eqref{feedback}, namely, the weak solution $\rho$ to the Cauchy
problem \eqref{eq}, \eqref{IC} and \eqref{feedback} converges to
$\ov \rho$ exponentially when time $t$ goes to $+\infty$?

If $\ov\rho=0$, the situation is simple. Natural feedback laws
can drive the state to zero exponentially fast. In particular, the
zero control produces a solution which vanishes after a finite time.
Nevertheless, if $\ov\rho \neq 0$, the situation is much more
complicated. More precisely, the stabilization results depend on the
equilibrium $\ov \rho \neq 0$ and the velocity function $\lambda $ through
the following quantity
 \be \label{def-d} d:=\frac{\ov \rho\lambda'(\ov
\rho)}{\lambda(\ov\rho)}. \ee

First we establish a complete result for the linearized system by
spectral approach. A sufficient and necessary condition of
exponential stability of the linearized system is given in Theorem
\ref{thm-linear}. However, an example  (see \cite[page 285]{HaleLunel93})
shows that an arbitrary small perturbation of the characteristic speeds of a
linear hyperbolic system may break the stability property,
hence it seems difficult to deduce the exponential stability of the original
nonlinear problem from the exponential stability of the linearized
system. In order to overcome this difficulty, we also use Lyapunov function
approach to prove exponential stability for the linearized system and, then, use the same Lyapunov function
to prove the (local) exponential stability for the nonlinear system.
 The Lyapunov functions that  we construct
in this paper are inspired by \cite{1999-Coron, CoronABastin07, XuCZ09, XuCZ02}. However they have to be modified
according to the nonlocal feature of the nonlinear system.

The structure of this paper is as follows:
In Section~\ref{secPreliminaries}, we prove the well-posedness of the nonlocal closed-loop system.
Then, in Section \ref{secstablin}, we study the exponential stability  of the linearized system.
The main results on the stabilization of the nonlinear problem,
Theorem \ref{thm-stab-0} and Theorem \ref{thm-stab},  and their proofs,  are given in Section \ref{secstabnonlinear}.

\section{Preliminaries}
\label{secPreliminaries}

In order to stabilize the system by feedback controls, we first need
to recall  the usual definition of a
weak solution to the Cauchy problem (see, e.g., \cite[Section 2.1]{CoronBook}) and then prove that the closed-loop system is well-posed.

\begin{defn}
\label{weaksol}
Let $\overline \rho \in \mathbb{R}$, $p\in [1,+\infty)$, $k\in \mathbb{R}$ and $\rho_0\in L^1(0,1)$  be given. A weak
solution of the Cauchy problem
  \be\label{cauchy}
 \begin{cases}
 \rho_t(t,x)+(\rho(t,x)\lambda(W(t)))_x=0, \quad t\in (0,+\infty),  x\in
 (0,1),\\
  \rho(0,x)=\rho_0(x),  \quad x\in (0,1),\\
  u(t)-\ov \rho  \lambda(\ov \rho)=k(y(t)-\ov\rho  \lambda(\ov \rho)), \quad t\in (0,+\infty)
 \end{cases}
 \ee
is a function $\rho\in C^0([0,+\infty);L^p(0,1))$ such that, for every
$T>0$, every $\tau\in[0,T]$ and every $\varphi\in C^1([0,\tau]\times[0,1])$ such
that
 \beq
 \varphi(\tau,x)= 0,\forall x\in[0,1]\quad \text{and}\quad
 \varphi(t,1)= 0,\forall t\in[0,\tau],
 \eeq
one has
 \begin{align*}
 -\int_0^{\tau} \int_0^1 \rho(t,x)(\varphi_t(t,x)  +\lambda(W(t))\varphi_x(t,x)) dx dt
   -\int_0^1 \rho_0(x)\varphi(0,x)dx
  \nonumber \\
  +\int_0^{\tau} \Big(y(t) \varphi(t,1) -[ky(t)+(1-k) \ov \rho\lambda(\ov \rho)] \varphi(t,0)\Big)dt
  =0.
 \end{align*}
\end{defn}

Then, the following Lemma holds.
\begin{lem}\label{lem-wellposed}
Let $\overline \rho \in \mathbb{R}$,
$p\in [1,+\infty)$ and $k\in \mathbb{R}$ be given.
For any given $\rho_0\in L^p(0,1)$, the Cauchy problem \eqref{cauchy} has a
unique weak solution $\rho\in C^0([0,+\infty);L^p(0,1))$.
Moreover, for every $T>0$, the following maps
\begin{gather}
\label{map1}
\rho_0\in L^p(0,1)\mapsto \rho \in C^0([0,T];L^p(0,1)),
\\
\label{map2}
\rho_0\in L^p(0,1)\mapsto (u,y)\in L^p(0,T)\times L^p(0,T),
\end{gather}
are continuous.
\end{lem}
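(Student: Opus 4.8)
The plan is to establish existence and uniqueness by the method of characteristics, exploiting the fact that although the velocity $\lambda(W(t))$ is nonlocal, it is \emph{spatially constant} at each fixed time $t$. The key observation is that if we knew the function $t\mapsto W(t)$ in advance, then equation \eqref{eq} would be a linear transport equation with a time-dependent (but space-independent) speed $a(t):=\lambda(W(t))>0$, whose weak solution is given explicitly by transporting the initial and boundary data along characteristics. Concretely, setting $s(t):=\int_0^t a(\sigma)\,d\sigma$ for the total displacement, a characteristic through $(t,x)$ reaches the boundary $x=0$ at the earlier time $\tau(t,x)$ defined by $s(t)-s(\tau)=x$ (if such $\tau\geq 0$ exists) or reaches the initial line $t=0$ at the point $x-s(t)$ otherwise. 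The feedback law \eqref{feedback} prescribes the influx $u(t)=\rho(t,0)a(t)$ in terms of the outflux $y(t)=\rho(t,1)a(t)$, and since the outflux at time $t$ is determined by data that entered at an earlier time, this introduces a \emph{delay} rather than an instantaneous loop, which is what makes the closed-loop system causal and solvable.

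First I would reformulate the problem as a fixed-point equation for the unknown scalar function $W\in C^0([0,T])$. Given any candidate $W$, define $a(t)=\lambda(W(t))$ and construct the corresponding solution $\rho[W]$ by the explicit characteristic formula: for $x>s(t)$ set $\rho(t,x)=\rho_0(x-s(t))$, and for $x<s(t)$ set $\rho(t,x)=\bigl(\text{influx datum}\bigr)/a(\tau(t,x))$ where the influx is reconstructed from \eqref{feedback}, i.e. $u(t)=k\,y(t)+(1-k)\ov\rho\lambda(\ov\rho)$, with $y$ itself read off from the outgoing characteristics. One then defines the map $\mathcal{F}(W)(t):=\int_0^1 \rho[W](t,x)\,dx$ and seeks a fixed point $W=\mathcal{F}(W)$. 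The integral $\int_0^1\rho[W](t,x)\,dx$ can be computed by changing variables along characteristics; because $\lambda$ is $C^1$ and bounded below by a positive constant on the relevant range, $\mathcal{F}$ maps a suitable closed ball of $C^0([0,T])$ into itself and is a contraction for $T$ small, giving local existence and uniqueness. Global existence on $[0,+\infty)$ then follows because the $L^p$ norm of $\rho$ is controlled a priori (mass is transported, not created in the interior), so the local solution can be continued without blow-up.

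The two continuity statements \eqref{map1} and \eqref{map2} I expect to fall out of the same construction. For \eqref{map1}, once the characteristic speed $a(t)$ is fixed, the solution map $\rho_0\mapsto\rho$ is built from translations and rescalings that are bounded operators on $L^p$; the dependence of $a$ on $\rho_0$ through the fixed point $W$ is Lipschitz by the contraction estimate, so composing these gives continuity of $\rho_0\mapsto\rho$ in $C^0([0,T];L^p(0,1))$. For \eqref{map2}, the influx $u$ and outflux $y$ are the boundary traces multiplied by $a(t)$; I would verify that these traces depend continuously on $\rho_0$ in $L^p(0,T)$ by the same characteristic change of variables, using that $y(t)=a(t)\rho(t,1)$ equals a time-reparametrized copy of either $\rho_0$ or the influx datum, and that $u$ is then determined by \eqref{feedback}.

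The main obstacle will be handling the regularity of the solution precisely at the level of weak solutions in $L^p$, since the characteristic formulas are cleanest for continuous data but must be justified for merely $L^p$ initial data. In particular, verifying that the function produced by the characteristic construction genuinely satisfies the integral identity in Definition \ref{weaksol} — including the correct appearance of the boundary term $[ky(t)+(1-k)\ov\rho\lambda(\ov\rho)]\varphi(t,0)$ — requires care with the test-function integration by parts along the time-varying characteristics, and uniqueness must be argued within the weak class rather than assuming continuity of $\rho$ across the characteristic emanating from the corner $(0,0)$. I would address this by first proving everything for smooth data and then passing to the limit using the $L^p$ bounds and the continuity of the maps established above, so that density of smooth functions in $L^p(0,1)$ upgrades the smooth-data result to the full statement.
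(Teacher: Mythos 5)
Your plan --- the method of characteristics for a transport equation whose speed $\lambda(W(t))$ is spatially constant, combined with a contraction/fixed-point argument on the scalar unknown $W\in C^0([0,T])$, followed by deducing the continuity of the maps \eqref{map1} and \eqref{map2} from the same estimates and a density argument for $L^p$ data --- is exactly the route the paper takes (its own proof is only a sketch writing down the characteristic representation formula and deferring the contraction and continuity details to \cite{CKWang} and \cite{SWang}). One caveat: your justification for global continuation, that ``mass is transported, not created in the interior,'' overlooks the boundary re-injection $u(t)=ky(t)+(1-k)\ov\rho\lambda(\ov\rho)$, which for $|k|>1$ does increase the norm; the correct observation is that $\|\rho(t,\cdot)\|_{L^p}$ grows at most geometrically with the number of transits of the domain, which is finite on every bounded time interval and therefore still suffices to continue the local solution to all of $[0,+\infty)$.
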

%JM->Z: I think that the lemma of the previous version imply the continuity
% at \ov \rho but not the continuity at arbitrary \rho_0 and this last continuity is important. .
%Moreover the inequalities (for example {rho-est}) of the previous seem to be too global.
\begin{proof}
The proof of Lemma \ref{lem-wellposed} is similar to that of
Theorem 2.3 in \cite{CKWang}, thus we only show the main ideas and omit the details of the proof.

Introduce the characteristic curve:
 \beq  \frac{d \xi}{ds} =\lambda(W(s)), \quad s\geq 0,
 \eeq
where $W(s)=\int_0^1 \rho(s,x) dx$.  Since $\rho$ is constant
along the characteristics, one can define a solution candidate in terms of $\xi$ as following
 \be \label{rho}
 \rho(t,x)=
 \begin{cases} \displaystyle
 \rho_0(x-\xi(t)),  \hfill \text{if}\  0\leq \xi(t)\leq x \leq 1,   \\\displaystyle
 k\rho(\xi^{-1}(\xi(t)-x),1) + \frac{(1-k)\ov \rho \lambda(\ov \rho) }{\lambda(W(\xi^{-1}(\xi(t)-x)))} ,   \\
   \qquad \hfill  \text{if} \  0\leq x \leq \xi(t)-n+1\leq 1,
    \  \text{or}\  0 \leq \xi(t)-n\leq x \leq 1\  \text{for}\  n\in \mathbb{N}.
     \end{cases}
 \ee
Then by contraction mapping principle and fixed point argument as in \cite{CKWang},
one can prove that \eqref{rho} is indeed the unique weak  solution to the original  system \eqref{cauchy}.
Thanks to \eqref{rho}, one can get the continuity of the maps \eqref{map1} and \eqref{map2}: See the proof of \cite[Theorem 4.1]{SWang}.

\end{proof}

Our next lemma is straightforward and we omit its proof.
\begin{lem}\label{lem-regular}
Let $\overline \rho \in \mathbb{R}$ and $k\in \mathbb{R}$ be given.
If $\rho_0\in C^1([0,1])$ satisfies the $C^1$ compatibility conditions
 \beq
 \begin{cases} \displaystyle
 \lambda(\int_0^1 \rho_0(x) dx) (\rho_0(0)-k\rho_0(1))-(1-k) \ov \rho \lambda(\ov \rho)=0,    \\\displaystyle
 \lambda(\int_0^1 \rho_0(x) dx) (\rho_0'(0)-k\rho_0'(1))
 - \lambda'(\int_0^1 \rho_0(x) dx) (\rho_0(0)-\rho_0(1)) (\rho_0(0)-k\rho_0(1))=0,
 \end{cases}
 \eeq
then the Cauchy problem \eqref{cauchy} admits a unique
classical solution $\rho\in C^1([0,+\infty) \times [0,1])$.
 \end{lem}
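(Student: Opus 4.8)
The plan is to construct the solution along characteristics, closing the nonlocal coupling through $W$ by the same fixed-point argument used for the weak solution \eqref{rho}, and then to read off the $C^1$ regularity directly from the two compatibility conditions. Uniqueness requires no separate work: a classical solution is in particular a weak solution, so it must coincide with the unique weak solution provided by Lemma \ref{lem-wellposed}. The entire task therefore reduces to exhibiting \emph{one} solution that is genuinely $C^1$ up to, and across, the characteristic issued from the corner $(0,0)$.

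First I would record what the two compatibility conditions mean. Since $\lambda(W(t))$ is independent of $x$, the equation reads $\rho_t=-\lambda(W(t))\rho_x$, so at $t=0$ one has $\rho_t(0,x)=-\lambda(W_0)\rho_0'(x)$ with $W_0:=\int_0^1\rho_0(x)\,dx$, and integrating the equation over $x$ gives $\dot W(0)=-\lambda(W_0)(\rho_0(1)-\rho_0(0))$. Writing the feedback as $\rho(t,0)=k\rho(t,1)+(1-k)\ov\rho\lambda(\ov\rho)/\lambda(W(t))$ and evaluating at $t=0$ yields precisely the first (zeroth-order) compatibility condition; differentiating it once in $t$ at $t=0$ and substituting the two identities above yields precisely the second (first-order) condition. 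Thus the hypotheses are exactly the $C^0$ and $C^1$ matching of the feedback law with the initial datum at the corner $(0,0)$.

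For local existence I set $a(t):=\lambda(W(t))$ and $\xi(t):=\int_0^t a(s)\,ds$. Once $a$ is known and continuous, $\xi$ is $C^1$ and strictly increasing, the characteristics are the curves $x-\xi(t)=\text{const}$, and \eqref{rho} defines $\rho$ explicitly from $\rho_0$ and from the reflected boundary values. I would then study the map $\Phi:W\mapsto\int_0^1\rho[W](\cdot,x)\,dx$, where $\rho[W]$ is this explicit solution of the known-speed problem, and show $\Phi$ is a contraction on a suitable ball of $C^1([0,T_0])$ for $T_0$ small; the only nonlinearity enters through the $C^1$ maps $s\mapsto\lambda(s)$ and $s\mapsto1/\lambda(s)$ (the latter well defined since $\lambda>0$), so the contraction estimate is routine, exactly as in Lemma \ref{lem-wellposed}. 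The fixed point $W$ is $C^1$, hence $a=\lambda(W)$ is $C^1$ and $\xi$ is $C^2$.

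It then remains to verify that $\rho=\rho[W]$ is $C^1$ on $[0,T_0]\times[0,1]$, and this is the only delicate point. Away from the characteristic $\Gamma_0:=\{x=\xi(t)\}$ issued from $(0,0)$ the regularity is immediate: on $\{x\ge\xi(t)\}$ one has $\rho(t,x)=\rho_0(x-\xi(t))$, which is $C^1$ since $\rho_0\in C^1$ and $\xi\in C^1$; on $\{x<\xi(t)\}$ the value is obtained from an already-$C^1$ boundary trace via the feedback and a $C^1$ time shift, hence is again $C^1$. The two determinations meet only along $\Gamma_0$, and there the limits of $\rho$ and of its first derivatives from both sides coincide precisely because of the first- and second-order compatibility conditions; note that $\Gamma_0$ reaches $(T_1,1)$ with $\xi(T_1)=1$, so this same matching is what guarantees $C^1$ regularity at all later reflection corners as well, the feedback merely transporting a smooth trace thereafter. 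The main obstacle is exactly this corner matching together with ensuring the nonlocally-defined speed $a=\lambda(W)$ is itself $C^1$. Finally I would globalize in time: by \eqref{rho}, $\rho$ is constant along characteristics and each reflection multiplies the boundary value by at most $|k|$ plus a bounded term, so $\|\rho(t,\cdot)\|_{L^\infty}$, and hence $W(t)$, grows at most exponentially and stays finite on every bounded interval; consequently $\lambda(W)$ remains bounded and bounded away from $0$, the characteristics never degenerate, and the local solution extends to all of $[0,+\infty)\times[0,1]$, with uniqueness inherited from Lemma \ref{lem-wellposed}.
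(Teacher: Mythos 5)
The paper states this lemma without proof (``straightforward and we omit its proof''), and your argument is the standard one it has in mind: the method of characteristics with a $C^1$ fixed point for $W$, together with the observation --- which you verify correctly --- that the two compatibility conditions are exactly the $C^0$ and $C^1$ matching of the feedback law with the datum at the corner $(0,0)$, and that this matching propagates along the reflected characteristic issued from $(0,0)$. The only wrinkle is your globalization step: the ``bounded term'' $(1-k)\ov\rho\lambda(\ov\rho)/\lambda(W(\cdot))$ in the reflection recursion is bounded only once $W$ is known to be bounded, so rather than this slightly circular exponential-growth estimate it is cleaner to get boundedness of $W$ on compact time intervals directly from the globally defined weak solution of Lemma~\ref{lem-wellposed} (continuity of $t\mapsto\rho(t,\cdot)$ in $L^p$ gives continuity, hence local boundedness, of $W$), after which $\lambda(W)$ is bounded above and away from zero on compacts and your local argument extends to all of $[0,+\infty)\times[0,1]$.
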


\section{Stabilization to $\ov \rho$ for the linearized system}
\label{secstablin}

Before studying  the nonlinear control system \eqref{cauchy}, we
first linearize it near $\ov\rho \in \mathbb{R}$ and then study the linearized
closed loop system:
 \be \label{cauchy-lin-til}
 \begin{cases}
 {\wt\rho}_t(t,x)+\lambda(\ov\rho)\wt\rho_x(t,x)=0,
     \quad t\in (0,+\infty),  x\in (0,1),\\
  {\wt\rho}(0,x)=\wt \rho_0(x),  \quad x\in (0,1),\\
  {\wt u}(t)=k {\wt y}(t) +(k-1) d\, {\wt W}(t), \quad t\in (0,+\infty),
 \end{cases}
 \ee
where $d$ is given by \eqref{def-d} and
 \beq
  \wt W(t):=\int_0^1 \wt \rho(t,x) dx,
     \quad
 \wt u(t):= \wt\rho(t,0) \lambda(\ov\rho),
 \quad \wt y(t):=\wt\rho(t,1) \lambda(\ov\rho).
\eeq

The stability result for \eqref{cauchy-lin-til} can be stated as
follows:

\begin{thm} \label{thm-linear}
Let $\ov\rho \in\mathbb{R}$ be a constant. Then, $0 \in
L^2(0,1)$ is  exponentially stable in $L^2(0,1)$ for the closed loop system
\eqref{cauchy-lin-til} if and only if $d
>-1$ and $|k|<1$.  That is to say: if and only if $d
> -1$ and $|k|<1$, there exist constants
$C=C(\ov\rho,k)>0$ and $\alpha=\alpha(\ov\rho,k)>0$ such that the following
holds: For any $\rho_0\in L^2(0,1)$, the weak solution $\rho\in
C^0([0,+\infty);L^2(0,1))$ to the Cauchy problem
\eqref{cauchy-lin-til} satisfies
 \be \label{decay-rho-tilde}
 \|\wt \rho(t,\cdot)\|_{L^2(0,1)} \leq C e^{-\alpha t} 
    \|\wt \rho_0\|_{L^2(0,1)},
 \quad \forall t\in [0,+\infty). \ee

\end{thm}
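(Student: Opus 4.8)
The plan is to compute the spectrum of the transport generator explicitly and then to pass from the spectral bound to the decay estimate \eqref{decay-rho-tilde} by a spectrum--determined growth argument. Write $c:=\lambda(\ov\rho)>0$. Since the transport speed in \eqref{cauchy-lin-til} is the constant $c$, I would substitute the ansatz $\wt\rho(t,x)=e^{st}\phi(x)$: the equation forces $\phi(x)=\phi(0)e^{-sx/c}$, while the feedback boundary condition (expressed through $\wt W(t)=\int_0^1\wt\rho\,dx$ and the quantity $d$ of \eqref{def-d}) becomes, after dividing by $\phi(0)$ and setting $\sigma:=s/c$, the characteristic equation $F(\sigma)=0$, where
\[ F(\sigma)=1-ke^{-\sigma}-\frac{(k-1)d}{\sigma}\bigl(1-e^{-\sigma}\bigr), \qquad F(0)=(1-k)(1+d). \]
Resolving $(sI-A)\wt\rho=f$ for the generator $A\wt\rho=-c\wt\rho_x$ (with domain encoding the nonlocal boundary condition) reduces to a first--order ODE in $x$ plus one scalar solvability condition, so the spectrum of $A$ is exactly the point spectrum $\{s:\ F(s/c)=0\}$, each eigenvalue of finite multiplicity. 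Thus \eqref{decay-rho-tilde} will follow once I establish that $\sup\{\mathrm{Re}\,s:\ F(s/c)=0\}<0$ together with a uniform resolvent bound.

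For necessity, I would first analyse the high--frequency roots. On any half--plane $\mathrm{Re}\,\sigma\ge\gamma$ with $\gamma>\ln|k|$ one has $|ke^{-\sigma}|\le|k|e^{-\gamma}<1$ while $\tfrac{(k-1)d}{\sigma}(1-e^{-\sigma})\to0$ as $|\sigma|\to\infty$, so $F$ has only finitely many roots there; a Rouch\'e/implicit--function argument shows that the roots form a sequence with $\mathrm{Re}\,\sigma\to\ln|k|$. Hence if $|k|\ge1$ there are roots (equivalently genuine solutions $e^{st}\phi$) with $\mathrm{Re}\,\sigma$ arbitrarily close to $\ln|k|\ge0$, which precludes exponential decay; this forces $|k|<1$. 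For the threshold on $d$, I would examine real roots: $F(+\infty)=1>0$, whereas $F(0)=(1-k)(1+d)$, so when $|k|<1$ and $d<-1$ we have $F(0)<0$ and therefore a positive real eigenvalue, while $d=-1$ places an eigenvalue at the origin; in both cases the corresponding solution $e^{st}\phi$ does not decay, so $d>-1$ is necessary.

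For sufficiency I would show that $|k|<1$ and $d>-1$ leave no root in the closed right half--plane. The key is that $F$ then has no purely imaginary root: setting $\sigma=i\nu$ in $\sigma F(\sigma)=\sigma-(k-1)d-[k\sigma-(k-1)d]e^{-\sigma}$, separating real and imaginary parts, and eliminating $(k-1)d$, one is led (for $\nu\neq0$, $\cos\nu\neq1$) to $(1-\cos\nu)(1+k)=0$, hence $k=-1$; the remaining cases $\cos\nu=1,\ \nu\neq0$ and $\nu=0$ force $k=1$ and $(1-k)(1+d)=0$ respectively. None of these can hold when $|k|<1$ and $d>-1$. Since the finitely many roots in any right half--plane depend continuously on $(k,d)$, cannot cross the imaginary axis (none lies on it), and by the asymptotic estimate cannot escape to the right, the number of roots with $\mathrm{Re}\,\sigma\ge0$ is constant on the connected set $\{|k|<1,\ d>-1\}$; evaluating at $(k,d)=(0,0)$, where $F\equiv1$ has no root at all, this number is $0$. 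Combining with the finiteness statement gives $\sup\{\mathrm{Re}\,\sigma:\ F(\sigma)=0\}=-\alpha_0<0$.

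The remaining and most delicate step is to convert this spectral bound into \eqref{decay-rho-tilde}, since for hyperbolic problems the spectral abscissa need not coincide with the growth bound (the very phenomenon recalled in the Introduction). Working on the Hilbert space $L^2(0,1)$, I would invoke the Gearhart--Pr\"uss theorem: it suffices to prove that $(sI-A)^{-1}$ is uniformly bounded on a half--plane $\{\mathrm{Re}\,s\ge\omega\}$ with $-c\alpha_0<\omega<0$. Using the explicit resolvent obtained by integrating the ODE in $x$, the only danger is uniformity as $|\mathrm{Im}\,s|\to\infty$, where the scalar solvability factor is $F(s/c)$; since $|ke^{-s/c}|\le|k|e^{-\omega/c}$ stays strictly below $1$ for $\omega$ close enough to $0$ (because $|k|<1$), the factor $F(s/c)$ is bounded away from $0$ uniformly in that half--plane, and for bounded $|\mathrm{Im}\,s|$ the resolvent is continuous and decays as $\mathrm{Re}\,s\to+\infty$. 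Gearhart--Pr\"uss then yields $\|e^{tA}\|_{\mathcal{L}(L^2)}\le Ce^{\omega t}$, i.e.\ \eqref{decay-rho-tilde} with $\alpha=-\omega$. I expect this final uniform high--frequency resolvent estimate, in which the hypothesis $|k|<1$ is essential, to be the main obstacle.
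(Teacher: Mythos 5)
Your proposal is correct and, at its core, follows the same spectral route as the paper: you arrive at the identical characteristic function (your $F$ is the paper's $f_{d,k}$, with $F(0)=(1-k)(1+d)$), you rule out $|k|\ge 1$ via the high-frequency roots clustering at $\Re\,\sigma=\ln|k|$, you rule out $d\le -1$ via the sign change of $F$ on the positive real axis and the root at the origin, and for sufficiency you exclude purely imaginary roots by exactly the computation leading to $(1+k)(1-\cos\nu)=0$ and then transport the root count in the closed right half--plane by a homotopy (yours deforms $(k,d)$ to $(0,0)$ inside $\{|k|<1,\ d>-1\}$, the paper's scales $d$ by $\theta\in[0,1]$ at fixed $k$; these are interchangeable degree--theoretic arguments, both requiring the same uniform lower bound on $|F|$ for $|\sigma|$ large in $\Re\,\sigma\ge 0$). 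The one genuine divergence is the passage from the spectral bound to the decay estimate \eqref{decay-rho-tilde}: the paper imports the spectrum--determined growth property $\omega(A)=s(A)$ from \cite{Lichtner} (Proposition \ref{spec-grow}), whereas you propose to prove a uniform resolvent bound on a half--plane $\{\Re\, s\ge\omega\}$ with $\lambda(\ov\rho)\ln|k|<\omega<0$ and invoke Gearhart--Pr\"uss; your sketch of that bound is sound (the Volterra part of the explicit resolvent is controlled by Young's inequality, and the only delicate factor is $1/F(s/c)$, which your lower bound on $|1-ke^{-s/c}|$ handles), so your version is self--contained where the paper's is shorter but rests on a cited result. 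One remark: the paper also gives a second, Lyapunov--function proof of the sufficiency direction in Section \ref{sect-Lyapu}; it is redundant for Theorem \ref{thm-linear} itself, but it is the argument that survives perturbation to the nonlinear system in Section \ref{secstabnonlinear}, which a purely spectral proof such as yours would not.
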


Next, we prove Theorem \ref{thm-linear} by spectral analysis in
Section \ref{section-spect}. While in Section \ref{sect-Lyapu}, we
give another proof, relying on a Lyapunov function
approach, that $d >-1$ and $|k|<1$ imply that $0\in L^2(0,1)$ is exponentially stable
for the closed loop system \eqref{cauchy-lin-til}. The Lyapunov
functions, which are constructed in Section \ref{sect-Lyapu}, will be
used for the stabilization of the nonlinear control system as well.

\subsection{Proof of Theorem \ref{thm-linear} by spectral analysis }
\label{section-spect}

Without loss of generality, we assume
 \beq \lambda(\ov \rho)=1. \eeq
Otherwise, a scaling transformation $t \mapsto \frac{t}{\lambda(\ov \rho)}$ can easily make it.
Thus, by \eqref{def-d},
 \beq d=\ov \rho\lambda'(\ov \rho). \eeq
Then we omit the $\wt{\phantom{\rho}}$ symbols  in Section \ref{section-spect} and Section \ref{sect-Lyapu},  and rewrite
\eqref{cauchy-lin-til} into the following
 \be \label{cauchy-lin}
 \begin{cases}
 \rho_t(t,x)+\rho_x(t,x)=0,
     \quad t\in (0,+\infty),  x\in (0,1),\\
  \rho(0,x)=\rho_0(x),  \quad x\in (0,1),\\
  \rho(t,0)=k \rho(t,1) +(k-1) d W(t), \quad t\in (0,+\infty),
 \end{cases}
 \ee
where
$ W(t)=\int_0^1  \rho(t,x) dx$.
The exponential decay estimate \eqref{decay-rho-tilde} turns to
 \be
\label{decay-rho}
   \|\rho(t,\cdot)\|_{L^2(0,1)} \leq C e^{-\alpha t} 
    \|\rho_0\|_{L^2(0,1)}, \quad \forall t
   \in [0,+\infty).
  \ee

Applying the results in  \cite{Lichtner}, we have the following propositions.

\begin{prop}
Let $k\in \mathbb{R}$ and $\rho_0 \in L^2(0,1)$ be given. The Cauchy problem \eqref{cauchy-lin} has a unique solution $\rho\in C^0([0,+\infty);L^2(0,1))$.
\end{prop}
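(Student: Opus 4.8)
The plan is to reduce well-posedness to a scalar Volterra integral equation for the boundary trace, solved by the method of steps (equivalently, one shows that the operator $\rho \mapsto -\rho_x$ with the nonlocal boundary condition generates a $C_0$-semigroup on $L^2(0,1)$, which is the abstract content invoked from \cite{Lichtner}; I prefer the hands-on route). Since the transport speed is $1$, any solution is constant along the characteristics $x-t=\mathrm{const}$, so, writing $g(t):=\rho(t,0)$, it must be represented by
\be
\rho(t,x)=
\begin{cases}
\rho_0(x-t), & 0\le t\le x,\\
g(t-x), & 0\le x< t,
\end{cases}
\ee
with $g$ as yet unknown. Everything is then determined by $\rho_0$ and $g$; the real issue is that the nonlocal boundary condition must close into a self-consistent equation for $g$ alone.

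For $t\in[0,1]$ the characteristic through $(t,1)$ reaches the initial line, so $\rho(t,1)=\rho_0(1-t)$, while splitting the mass integral at $x=t$ gives $W(t)=\int_0^t g(s)\,ds+\int_0^{1-t}\rho_0(y)\,dy$. Substituting into the third line of \eqref{cauchy-lin} yields the second-kind Volterra equation
\be
g(t)-c\int_0^t g(s)\,ds=k\rho_0(1-t)+c\int_0^{1-t}\rho_0(y)\,dy=:F_0(t),\qquad c:=(k-1)d,
\ee
with $F_0\in L^2(0,1)$. The associated Volterra operator has constant kernel $c$, hence is quasinilpotent, so the Neumann series converges and furnishes a unique $g\in L^2(0,1)$ (explicitly $g=F_0+c\int_0^{\cdot}e^{c(\cdot-s)}F_0(s)\,ds$).

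On each subsequent interval $[n,n+1]$, $n\ge1$, every characteristic now emanates from the boundary, so $\rho(t,x)=g(t-x)$, $\rho(t,1)=g(t-1)$ and $W(t)=\int_{t-1}^t g(s)\,ds$. The values of $g$ on $[n-1,n]$ being already determined, the boundary condition becomes
\be
g(t)-c\int_n^t g(s)\,ds=kg(t-1)+c\int_{t-1}^n g(s)\,ds=:F_n(t),
\ee
again a second-kind Volterra equation with known, $L^2$ right-hand side, uniquely solvable on $[n,n+1]$ by the same quasinilpotency argument. By induction this determines $g\in L^2_{\mathrm{loc}}([0,+\infty))$, and hence $\rho$, uniquely on all of $[0,+\infty)$.

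Finally I reconstruct $\rho$ from the representation above and check $\rho\in C^0([0,+\infty);L^2(0,1))$: with $\rho_0\in L^2(0,1)$ and $g\in L^2_{\mathrm{loc}}$, continuity of $t\mapsto\rho(t,\cdot)$ into $L^2(0,1)$ follows from continuity of translation in $L^2$, and the two branches match across the corners $t=n$, so no jump is introduced there. Uniqueness is automatic, since any solution produces, via characteristics, exactly the Volterra equations above, whose solutions are unique on each step. \textbf{Main obstacle.} The genuine difficulty is the nonlocal coupling through $W(t)$: unlike a pure transport-with-delay problem, the trace $g$ cannot be read off explicitly from the characteristics but is only implicitly constrained; the crux is to recognize this constraint as a second-kind Volterra equation with bounded kernel and exploit quasinilpotency for unique solvability, while keeping track of the merely $L^2$ regularity (traces are not classical) and verifying $C^0$-in-time continuity at the endpoints $t=n$.
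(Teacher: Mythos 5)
Your argument is correct, but it is not the route the paper takes: the paper gives no proof of this proposition at all, deriving it (together with Propositions 3.2 and 3.3) directly from the results of the cited reference \cite{Lichtner}, which establishes that linear transport operators with boundary conditions of this reflection-plus-nonlocal-term type generate a $C_0$-semigroup on $L^2$ and, crucially for the sequel, that the spectrum-determined growth property holds. Your method-of-steps reduction to second-kind Volterra equations for the boundary trace $g(t)=\rho(t,0)$ is sound: the representation along characteristics, the computation of $W(t)$ on $[0,1]$ and on $[n,n+1]$, the constant-kernel resolvent $g=F_0+c\int_0^{\cdot}e^{c(\cdot-s)}F_0(s)\,ds$, and the induction are all correct, and continuity into $L^2(0,1)$ does follow from continuity of translation. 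What your approach buys is self-containedness and an explicit solution formula (it is in fact closer in spirit to the paper's own proof of Lemma 2.1 for the nonlinear problem, where the Volterra resolvent is replaced by a fixed-point argument); what the paper's citation buys is the full semigroup/spectral machinery ($\sigma_p(A)=\sigma(A)$ and $\omega(A)=s(A)$) on which the entire spectral proof of Theorem 3.1 rests, and which your construction does not provide by itself (though the solution operator you build is readily checked to be a $C_0$-semigroup). One point you gloss over slightly: for the uniqueness claim you implicitly use that any $C^0([0,+\infty);L^2)$ solution is constant along characteristics and has $L^2_{\mathrm{loc}}$ traces at $x=0,1$, so that it necessarily satisfies your Volterra equations; this is standard for one-dimensional transport but should be said, since the traces of a merely $L^2$-in-$x$ function are not a priori defined.
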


\begin{prop} \label{point-spec}
Let $S(t) \ (t\geq 0)$ be the $C_0$ semigroup on $L^2(0,1)$ that corresponds to the solution map of \eqref{cauchy-lin}
and $A$ be the infinitesimal generator of the semigroup $S(t)\ (t\geq 0)$.
Denote $\sigma_p(A)$ and $ \sigma(A)$ as the point spectrum and the spectrum of $A$, respectively. Then,
  \beq
   \sigma_p(A)= \sigma(A).
  \eeq
\end{prop}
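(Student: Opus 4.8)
The plan is to show that the spectrum of the generator $A$ consists entirely of eigenvalues. Since this is a transport equation on a bounded interval with a boundary condition that couples the inflow to the outflow through a nonlocal (integral) term, the generator $A$ should have compact resolvent, or at least one can solve the resolvent equation explicitly. I would proceed by directly analyzing the resolvent equation $(\lambda I - A)\rho = f$ and showing that for $\lambda \notin \sigma_p(A)$ the operator $\lambda I - A$ is boundedly invertible, which gives $\lambda \notin \sigma(A)$; the reverse inclusion $\sigma_p(A) \subseteq \sigma(A)$ is automatic.

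First I would identify the generator explicitly. A natural guess is $A\rho = -\rho'$ on the domain
\[
D(A) = \Bigl\{ \rho \in H^1(0,1) : \rho(0) = k\,\rho(1) + (k-1)\,d\int_0^1 \rho(x)\,dx \Bigr\}.
\]
Then I would write down and solve the ODE $\lambda \rho + \rho' = f$ on $(0,1)$, whose general solution is
\[
\rho(x) = e^{-\lambda x}\rho(0) + \int_0^x e^{-\lambda(x-s)} f(s)\,ds,
\]
leaving the single unknown $\rho(0)$. Imposing the nonlocal boundary condition yields one scalar linear equation for $\rho(0)$ whose coefficient is a function of $\lambda$, call it $\Delta(\lambda)$, built out of $e^{-\lambda}$ and $\int_0^1 e^{-\lambda x}\,dx = (1-e^{-\lambda})/\lambda$. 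The key dichotomy is: $\lambda \in \sigma_p(A)$ precisely when $\Delta(\lambda) = 0$ (so that the homogeneous equation has a nontrivial solution), and when $\Delta(\lambda) \neq 0$ one can solve uniquely for $\rho(0)$, obtain $\rho \in D(A)$, and check boundedness of the resolvent. This would establish $\sigma(A) \subseteq \sigma_p(A)$, hence equality.

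The main subtlety I anticipate is the point $\lambda = 0$, where the formulas $(1-e^{-\lambda})/\lambda$ must be interpreted by continuity, and more generally verifying that the constructed $\rho$ genuinely lies in $D(A)$ and that the inverse is a bounded operator on $L^2(0,1)$, not merely well-defined pointwise. I would also want to confirm that $A$ really is the generator of the semigroup $S(t)$ from the preceding proposition rather than merely a plausible candidate; this is where I expect to lean most heavily on the cited results in \cite{Lichtner}, which treat exactly such hyperbolic systems and presumably supply both the characterization of $D(A)$ and the fact that the spectrum is purely a point spectrum. Indeed, since the paper invokes \cite{Lichtner} for these propositions, the cleanest route is likely to quote the general spectral theory there—for a single transport equation with nonlocal boundary feedback the resolvent is determined by a scalar characteristic function $\Delta(\lambda)$ that is entire (or meromorphic) in $\lambda$, and the equality $\sigma_p(A) = \sigma(A)$ is a standard consequence of this structure—so the bulk of the argument is the explicit resolvent computation outlined above, with the citation filling in the functional-analytic framework.
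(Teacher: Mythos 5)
Your approach is correct, and it is genuinely more self-contained than what the paper offers: the paper gives no proof of this proposition at all, simply stating that it follows from the results of \cite{Lichtner}. Your explicit resolvent computation does the job. Taking $A\rho=-\rho'$ on $D(A)=\{\rho\in H^1(0,1):\ \rho(0)=k\rho(1)+(k-1)d\int_0^1\rho\,dx\}$, the variation-of-constants formula $\rho(x)=e^{-\lambda x}\rho(0)+\int_0^xe^{-\lambda(x-s)}f(s)\,ds$ reduces the boundary condition to the scalar equation $\Delta(\lambda)\,\rho(0)=\ell_\lambda(f)$ with $\Delta(\lambda)=1-ke^{-\lambda}+(1-k)d\,(1-e^{-\lambda})/\lambda$, which is precisely the paper's characteristic function $f_{d,k}$ of \eqref{def-fd}, entire in $\lambda$ once the removable singularity at $\lambda=0$ is filled in by $(1+d)(1-k)$ as you anticipate. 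When $\Delta(\lambda)\neq 0$ this formula defines a bounded inverse, and in fact a compact one: the Volterra term maps $L^2(0,1)$ into $H^1(0,1)$ and the $\rho(0)$-term is rank one, so $A$ has compact resolvent (the resolvent set is nonempty because $\Delta\to 1$ as $\Re(\lambda)\to+\infty$) and $\sigma(A)=\sigma_p(A)=\{\lambda:\Delta(\lambda)=0\}$ follows at once. The one step you should not leave hanging between ``cite'' and ``compute'' is the identification of this $A$ as the actual generator of $S(t)$; a short argument suffices (the operator $A$ above generates a $C_0$ semigroup agreeing with the explicit solution map on the dense set $D(A)$, and two generators with a common resolvent point, one extending the other, coincide), or one quotes \cite{Lichtner} for exactly this. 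What your route buys is an elementary, fully explicit proof in this scalar case; what the citation buys the authors is uniformity with Proposition \ref{spec-grow}, whose spectrum-determined growth statement $\omega(A)=s(A)$ is the part that genuinely requires \cite{Lichtner} and does not follow from compactness of the resolvent.
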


%\begin{prop}
%Denote $\sigma(S(t))$ as the spectrum of $S(t)$. Then,  the Spectral  Mapping Theorem holds:
% \beq
% \sigma(S(t))\backslash \{0\} =\overline{e^{\sigma(A)t}} \backslash \{0\}, \quad \forall t \in [0,+\infty),
% \eeq
%where $\overline{e^{\sigma(A)t}}$ is the closure of the set $e^{\sigma(A)t}$ in $\mathbb{C}$.
%\end{prop}

\begin{prop} \label{spec-grow}
Let $\omega(A):=\inf \{ \omega\in \mathbb{R} \  |\ \exists M=M(\omega):  \|S(t)\|\leq M e^{\omega t} ,\,  \forall t\geq 0  \}$
and $s(A):= \sup\{ \Re(\mu) \  |  \  \mu \in \sigma (A) \}$, where $\Re(\mu)$ denotes the
real part of $\mu$. One has
\beq
  \omega(A)=s(A).
 \eeq
\end{prop}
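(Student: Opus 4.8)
The plan is to deduce the equality $\omega(A)=s(A)$ (the so-called spectrum-determined growth condition) from the spectral mapping theorem for linear hyperbolic systems established in \cite{Lichtner}. Recall that for an arbitrary $C_0$ semigroup one always has $s(A)\le\omega(A)$, so the real content of the proposition is the reverse inequality $\omega(A)\le s(A)$.

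First I would recall the general identity $\omega(A)=\frac{1}{t}\log r(S(t))$, valid for every $t>0$, where $r(\cdot)$ denotes the spectral radius; this is a standard consequence of the definition of $\omega(A)$ together with Gelfand's formula for the spectral radius (equivalently, $r(S(t))=e^{\omega(A)t}$). Thus it suffices to compute $r(S(t))$ in terms of $\sigma(A)$.

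Next, the heart of the argument is the spectral mapping theorem $\sigma(S(t))\setminus\{0\}=\exp(t\,\sigma(A))$ for $t>0$. The operator $A$ is the generator associated with the scalar transport equation $\rho_t+\rho_x=0$ supplemented by the boundary relation $\rho(t,0)=k\rho(t,1)+(k-1)dW(t)$; this is a linear hyperbolic system in one space variable of exactly the type treated in \cite{Lichtner}, for which the spectral mapping theorem is known to hold. The one point that requires care is the nonlocal term $(k-1)dW(t)=(k-1)d\int_0^1\rho(t,x)\,dx$ appearing in the boundary condition: I would verify that this term fits the admissible boundary operators in \cite{Lichtner}, or else treat it as a bounded perturbation of the generator $A_0$ corresponding to the purely reflective condition $\rho(t,0)=k\rho(t,1)$ and check that it does not destroy the spectral mapping property. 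Granting the theorem, and since $S(t)$ is a bounded operator so that $\sigma(S(t))$ is compact, we obtain $r(S(t))=\sup\{\,|e^{t\mu}| : \mu\in\sigma(A)\,\}=e^{t\,s(A)}$, using that $s(A)\le\omega(A)<\infty$ guarantees the supremum is finite. Combined with the identity of the previous step, this yields $\omega(A)=s(A)$.

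The main obstacle I anticipate is precisely the verification that the nonlocal boundary coupling is covered by the framework of \cite{Lichtner}; once the spectral mapping theorem is in force, the remaining manipulations are routine. It is worth noting that Proposition \ref{point-spec} (asserting $\sigma_p(A)=\sigma(A)$) dovetails with this argument, since it shows that the spectrum is exhausted by eigenvalues, which are governed by an explicit characteristic equation obtained by inserting $\rho(t,x)=e^{\mu t}\phi(x)$ into \eqref{cauchy-lin}; this makes $s(A)$ computable and, through the present proposition, directly controls the exponential decay rate in \eqref{decay-rho}.
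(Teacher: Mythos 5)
Your proposal is correct and follows essentially the same route as the paper: the paper gives no argument for Proposition~\ref{spec-grow} beyond the sentence ``Applying the results in \cite{Lichtner}, we have the following propositions,'' and the intended derivation is precisely the one you spell out --- the spectral mapping theorem for linear hyperbolic systems from \cite{Lichtner} combined with the identity $r(S(t))=e^{\omega(A)t}$ from Gelfand's formula. Your caveat about verifying that the nonlocal boundary term $(k-1)d\,W(t)$ is admissible in (or is a harmless perturbation within) Lichtner's framework is exactly the point the paper leaves implicit, so flagging it is appropriate rather than a gap.
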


Obviously, by Proposition \ref{point-spec} and Proposition \ref{spec-grow},  Theorem \ref{thm-linear} is equivalent to the following
Lemma:

\begin{lem} \label{lem-spectral} One has
 $s(A)<0$  if and only of $d >-1$ and $|k|<1$.
\end{lem}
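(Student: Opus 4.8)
The plan is to reduce the statement to locating the zeros of the characteristic function of \eqref{cauchy-lin} and then to track those zeros as $(k,d)$ ranges over the region in question. Since the equation is the pure transport equation $\rho_t+\rho_x=0$, the ansatz $\rho(t,x)=e^{\mu t}\phi(x)$ forces $\phi(x)=\phi(0)e^{-\mu x}$, and substituting into the feedback boundary condition $\rho(t,0)=k\rho(t,1)+(k-1)dW(t)$ yields, after dividing by $\phi(0)\ne 0$, the characteristic equation $\Delta(\mu)=0$ with
\[
 \Delta(\mu):=ke^{-\mu}+(k-1)d\int_0^1 e^{-\mu x}\,dx-1 .
\]
Here $\int_0^1 e^{-\mu x}\,dx=(1-e^{-\mu})/\mu$ is entire, so $\Delta$ is entire and its zeros are exactly $\sigma_p(A)$. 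By Proposition \ref{point-spec} and Proposition \ref{spec-grow} we have $\sigma(A)=\sigma_p(A)$ and $\omega(A)=s(A)$, so Lemma \ref{lem-spectral} is equivalent to showing that all zeros of $\Delta$ lie in some open left half-plane $\{\Re\mu\le-\alpha\}$, $\alpha>0$, precisely when $d>-1$ and $|k|<1$.

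For the sufficiency I would first establish the key fact that, when $|k|<1$ and $d\ne-1$, $\Delta$ has no zero on the imaginary axis. Writing $\mu=ib$ and separating real and imaginary parts gives
\[
 1=k\cos b+(k-1)d\,\tfrac{\sin b}{b},\qquad 0=k\sin b+(k-1)d\,\tfrac{1-\cos b}{b}.
\]
At $b=0$ these reduce to $(k-1)(1+d)=0$, impossible for $k\ne 1$ and $d\ne-1$; at $b\in 2\pi\mathbb{Z}\setminus\{0\}$ the first equation forces $k=1$, again impossible. For the remaining $b$ I would solve the second equation for $(k-1)d=-kb\cot(b/2)$ and substitute into the first; using $\cot(b/2)\sin b=1+\cos b$ this collapses to $1=-k$, contradicting $|k|<1$. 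Hence no purely imaginary eigenvalue (the case $\mu=0$ included) exists throughout $R:=\{\,d>-1,\ |k|<1\,\}$.

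With the imaginary axis free of spectrum on all of $R$, I would invoke a continuity/winding-number argument. The set $R$ is connected, and by a Rouch\'e comparison of $\Delta$ with $ke^{-\mu}-1$ the zeros with large $|\Im\mu|$ cluster near the line $\Re\mu=\ln|k|<0$; thus only finitely many zeros can sit in any fixed right half-plane, and the number of zeros in $\{\Re\mu\ge0\}$ is a continuous, integer-valued, hence locally constant, function of $(k,d)$ on $R$. Evaluating at the base point $(k,d)=(0,0)$, where $\Delta\equiv-1$ has no zeros at all, shows this number is $0$ on all of $R$; together with the clustering near $\Re\mu=\ln|k|<0$ this yields a uniform gap $s(A)\le-\alpha<0$, which is the ``if'' direction. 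For the necessity I would exhibit, in each complementary case, a zero of $\Delta$ in $\{\Re\mu\ge0\}$: if $d<-1$ and $|k|<1$, then $\Delta(0)=(k-1)(1+d)>0$ while $\Delta(\mu)\to-1$ as $\mu\to+\infty$ along the reals, so the intermediate value theorem produces a positive real zero and $s(A)>0$; if $d=-1$, then $\Delta(0)=0$, so $0\in\sigma(A)$ and $s(A)\ge0$; and if $|k|\ge1$, the same Rouch\'e comparison yields zeros with $\Re\mu\to\ln|k|\ge0$, whence $s(A)\ge0$. The neat computational core is the elimination collapsing the imaginary-axis condition to $k=-1$; I expect the main obstacle to be the analytic bookkeeping at infinity, namely making the Rouch\'e/winding-number continuation rigorous with enough uniform control of the high-frequency zeros that the right-half-plane count is finite and locally constant and that one obtains the strict gap $s(A)<0$ rather than merely $s(A)\le0$.
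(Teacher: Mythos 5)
Your proposal is correct and follows essentially the same route as the paper: the same characteristic function (your $\Delta=-f_{d,k}$), the same trigonometric elimination showing that purely imaginary zeros force $k=-1$ (the paper gets $b(1+k)(1-\cos b)=0$ by cross-multiplying, which is your substitution in disguise), a degree/winding-number homotopy to conclude the right half-plane is zero-free (the paper homotopes $d\mapsto\theta d$ rather than moving $(k,d)$ through the region, an inessential difference), the same intermediate-value argument for $d<-1$, and the same comparison with $1-ke^{-\mu}$ both for the strict gap $s(A)<0$ and for producing right-half-plane zeros when $|k|>1$. The only minor deviation is the boundary case $k=-1$, $d>-1$, where the paper exhibits infinitely many zeros exactly on the imaginary axis via $-\tfrac b2\cot\tfrac b2=d$, while you reach the (sufficient) conclusion $s(A)\geq 0$ by a softer Rouch\'e clustering argument at $\Re\mu=\ln|k|=0$.
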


\noindent {\bf Proof of Lemma \ref{lem-spectral}:}
We only need to study the eigenvalues of the system \eqref{cauchy-lin}. Let $\mu \in\mathbb{C}$ be an eigenvalue of the system
\eqref{cauchy-lin} and $\phi \neq 0$ be a corresponding
eigenfunction. The pair $(\mu,\phi)$ satisfies
\beq
\label{eqonphi}
\begin{cases}
 \mu \phi(x)+\phi'(x)=0,  \quad x\in (0,1),

 \\
 \displaystyle
   \phi(0)=k \phi(1) +(k-1) d \, \int_0^1 \phi(x) dx.
\end{cases}
\eeq
 For $\mu \in\mathbb{C}$, the existence of $\phi\not =0$
such that \eqref{eqonphi}  holds is equivalent to
\be \label{eigen-eqn}
  \displaystyle 1-k e^{-\mu}+(1-k) d  \int_0^1  e^{\mu x} dx =0,
\ee
the corresponding eigenfunction being $\phi(x)=e^{-\mu x}$ (up to a multiplicative factor).

Then we analyze the solution for the characteristic equation
\eqref{eigen-eqn} in various cases.

\emph{Case 1. $d=-1$ and $k\in \mathbb{R}$}.

Obviously, \eqref{eigen-eqn} admits a zero
eigenvalue $\mu=0$ if and only if
  \be \label{eigen-zero} (1+d)(1-k)=0. \ee
Hence, if $d=-1$, \eqref{eigen-eqn} admits a solution
$\mu=0$ which shows immediately that $0\in L^2(0,1)$ is not
asymptotically stable for \eqref{cauchy-lin-til} whatever $k\in \mathbb{R}$ is.

\emph{Case 2. $d\neq -1$ and $k=1$.}

In view of \eqref{eigen-zero}, \eqref{eigen-eqn} admits a solution
$\mu=0$ and thus $0\in L^2(0,1)$ is not asymptotically stable for \eqref{cauchy-lin-til}.

\emph{Case 3. $d\neq -1$ and $k\neq 1$.}

We need to analyze the nontrivial solution of the following
equation:
 \beq
 1- ke^{-\mu} +  d (1-k)   \frac{1- e^{-\mu}}{\mu}=0, \quad \mu\neq 0.
 \eeq
It is equivalent to study the zero points of the following
continuous function:
  \be \label{def-fd}
   f_{d,k}(\mu) :=
    \begin{cases}
    \displaystyle
    1-k e^{-\mu} + d(1-k)  \frac{1-e^{-\mu}}{\mu},
    \quad \text{if}\ \mu\neq  0,\\
    (1+d) (1-k),  \quad \text{if} \ \mu= 0.
   \end{cases}
   \ee

\emph{Case 3.1. $d\neq -1$ and $ |k|> 1 $.}

We will apply degree theory for homotopic functions (see
\cite[Appendix B]{CoronBook}) to show that $f_{d,k}(\mu)$ has
infinite zero points in the right half plane $\{\mu\in \mathbb{C}|
\Re(\mu)>0 \}$, and,  therefore, $0\in L^2(0,L)$ is not stable for \eqref{cauchy-lin-til}. In fact, $f_{d,k}$ behaves close to $1-k
e^{-\mu}$ as $|\mu| \rightarrow +\infty$.

Let
  \be \label{def-H} H(\theta,d,k,\mu):= f_{\theta d,k}(\mu)=
  1-k e^{-\mu} + \theta d (1-k)  \frac{1-e^{-\mu}}{\mu}, \quad \mu \neq 0. \ee
Then, in particular, $H(0,d,k,\mu)= f_{0,k}(\mu)=1-k e^{-\mu}$ and
$H(1,d,k,\mu)= f_{d,k}(\mu)$.  Obviously, $f_{0,k}$ vanishes at $\mu_{k,n}$ with
\beq
\mu_{k,n}: =
\begin{cases}
\ln k+ i 2n\pi, \quad \text{if} \quad  k >0,\\
\ln |k|+ i (2n+1)\pi, \quad \text{if} \quad  k<0,\\
\end{cases}
\quad  \forall n\in \mathbb{Z}.
\eeq
Note that, since $|k|>1$, $\Re\mu_{k,n}>0$. For any fixed $n\in \mathbb{Z}$ and $\vep>0$, let
 \beq
 \Omega_{k,n}^{\vep}:= \{\mu \in \mathbb{C} | |1-k e^{-\mu}| < \vep \ \text{and}
 \ |\mu-\mu_{k,n}|<1 \} \subset \mathbb{C}.
 \eeq
It is easy to see that $\Omega_{k,n}^{\vep}$ is a bounded open set
of $\mathbb{C}$ and
 \be  \mathrm{deg}(f_{0,k}(\mu),\Omega_{k,n}^{\vep},0)=1, \quad \forall n\in \mathbb{Z}.
 \ee
One also easily checks that, if $\vep>0$ is small enough,
\begin{gather}
\label{condiep1}
\partial \Omega_{k,n}^{\vep} \subset \{\mu \in \mathbb{C}| |1-k e^{-\mu}|=\vep \},
 \quad \forall n\in \mathbb{Z},
\\
\label{condiep2}
\Omega_{k,n}^{\vep} \subset \{\mu \in \mathbb{C}| \Re(\mu) >0 \},
\quad \forall n\in \mathbb{Z}.
\end{gather}
We now fix $\varepsilon >0$ small enough so that \eqref{condiep1} and \eqref{condiep2} hold.
Notice that, for every $\theta\in [0,1]$ and any $\mu \in
\partial \Omega_{k,n}^{\vep}$,
  \beq
   |H(\theta,d,k,\mu) |
   =| 1-k e^{-\mu} + \theta d (1-k) \frac{1-e^{-\mu}}{\mu} |
   \geq \vep -|\theta (1-k)||d|  \frac{1+ \displaystyle \frac{1 +\vep}{|k|} }{|\mu_{k,n}| - 1}.
   \eeq
Hence, for any fixed $\vep>0$, there exists $N \in \mathbb{Z}^+$
such that: for all $n\in \mathbb{Z}, |n|>N$,
   \beq
    |H(\theta,d,k,\mu) |\geq \frac{\vep}{2} >0,
    \quad \forall \theta\in [0,1], \forall \mu\in \partial \Omega_{k,n}^{\vep}.
   \eeq
Applying degree theory \cite[Appendix B]{CoronBook}, we get for all
$n\in \mathbb{Z}, |n|>N$ and all $\theta\in [0,1]$ that
 \begin{align*}
 \mathrm{deg}(f_{d,k}(\mu),\Omega_{k,n}^{\vep},0)
 &= \mathrm{deg}(H(1,d,k,\mu),\Omega_{k,n}^{\vep},0)
 = \mathrm{deg}(H(\theta,d,k,\mu),\Omega_{k,n}^{\vep},0)\\
 &= \mathrm{deg}(H(0,d,k,\mu),\Omega_{k,n}^{\vep},0)
 = \mathrm{deg}(f_{0,k}(\mu),\Omega_{k,n}^{\vep},0) =1.
 \end{align*}
Therefore, $f_{d,k}$ has one zero point in
$\Omega_{k,n}^{\vep} \subset \{\mu \in \mathbb{C} | \Re(\mu) >0 \}$
for every $n\in \mathbb{Z}, |n|>N$.

\emph{Case 3.2. $d< -1$ and $-1 \leq k <1 $. }

Notice that $f_{d,k}(0)= (1+d)(1-k) <0 $ and that $f_{d,k}(\mu)
\rightarrow 1 $ as $\mu \rightarrow +\infty$ with $\mu \in
\mathbb{R}$.
These facts together with the continuity imply that $f_{d,k}$ has at least one zero point in $ (0,+\infty)$.

\emph{Case 3.3. $d > -1$ and $k=-1$. }

We prove that in this case $f_{d,-1}$ has infinite zero points
on the imaginary axis. Let $\mu=ib \ (b\in \mathbb{R} \setminus \{0\} )$ be such that
  \be \label{eqn-ib}
  f_{d,-1}(ib)
  = 1+ e^{-ib}  +2d  \frac{1- e^{-ib}}{ib}
  =0,
  \ee
i.e.,
\beq
\label{eqbcomplex}
ib (1+ \cos{b} -i \sin{b}) + 2d (1-\cos{b}+i\sin{b})=0.
\eeq
Hence $f_{d,-1}(ib)=0$ if and only if
\begin{align} \label{eqn-Im}
   & b (1+\cos{b}) + 2d \sin{b} =0,
      \\ \label{eqn-Re}
   & b \sin{b} + 2d (1-\cos{b})=0.
\end{align}
Note that \eqref{eqn-Im} and \eqref{eqn-Re} together with $b\not =0$ imply that $\cos b \not =1$. Then, using also the identity
  \beq
   b(1+\cos{b}) (1-\cos{b}) = b\sin^2{b},
   \eeq
one gets that  $b\in \R\setminus\{0\}$
is a solution of \eqref{eqn-ib} if and only if $b\in\R\setminus\{0\} $ satisfies
 \be \label{eqn-g} g(b):= \frac{b\sin{b}}{2(\cos{b}-1)} -d
 = -\frac {b}{2} \cot{\frac b2}-d=0.
\ee

Obviously, for any fixed $d>-1$, $g(\cdot)\in C^0(2n\pi, 2(n+1)\pi)$
for all $n\in \mathbb{Z}$ and
  \begin{align*}
   & g(b)\rightarrow -\infty, \quad \text{as}\quad b \rightarrow
        2n\pi^-, \forall n\in \mathbb{Z}^+,
   \\
    &   g(b)\rightarrow +\infty, \quad \text{as}\quad b \rightarrow
   2(n+1)\pi^+, \forall n\in \mathbb{Z}^+.
  \end{align*}
Therefore, $g(\cdot)$ vanishes at least once in each
interval $(2n\pi, 2(n+1)\pi)$ for $n\in \mathbb{Z}^+$, which  implies
that $f_{d,-1}$ has infinite zero points on the imaginary axis.

 \emph{Case 3.4. $d> -1$ and $ |k| <1 $. }

We apply degree theory \cite[Appendix B]{CoronBook} again for
homotopic functions to show firstly that $s(A) \leq 0$, namely, $f_{d,k}$ has no zero points
in the right half plane $\{\mu \in \mathbb{C}| \Re(\mu) \geq 0 \}$.

Let $H(\theta,d,k,\mu)$ be defined by \eqref{def-H} and for $R>0$,
   \beq
    \Omega_{R}:= \{\mu \in \mathbb{C} |
     \Re(\mu)>0  \ \text{and}\ |\mu|< R\}.
    \eeq
For any $R>0$, $H(0,d,k,\mu)=f_{0,k}(\mu)$ has no zero points in
$\Omega_R$ since $|f_{0,k}(\mu)| \geq 1- |k e^{-\mu}| \geq 1-|k|>0$
for all $\mu \in \mathbb{C}, \Re(\mu) \geq 0$.

Then we claim that: for $R>0$ sufficiently large
\begin{gather}
\label{OKpourRgrand}
H(\theta,d,k,\mu)=f_{\theta d,k}(\mu) \neq 0, \quad
\forall \theta \in [0,1], \, \forall \mu \in \{\mu\in \mathbb{C}|
\Re(\mu)\geq 0 \ \text{and}\ |\mu|\geq R\},
\end{gather}
Property \eqref{OKpourRgrand} readily follows  from
\begin{gather*}
|f_{\theta d,k}(\mu)|\geq 1-|k|-2 \, \frac{|\theta d||1-k|}{|\mu|},
\quad \forall \mu \in \mathbb{C}\setminus\{0\}
\text{ such that }\Re (\mu)\geq 0.
\end{gather*}

Next we claim that: for $R>0$ sufficiently large,
\begin{gather}
\label{OKsurbordOmegaR}
H(\theta,d,k,\mu)=f_{\theta d,k}(\mu) \neq 0, \quad \forall \mu \in \partial\Omega_R.
\end{gather}
To prove \eqref{OKsurbordOmegaR}, one first points out that, by \eqref{OKpourRgrand}, it is
sufficient to prove that  $f_{\theta
d,k}$ does not vanish on the imaginary axis. We use a contradiction argument to
prove this fact.

Let $\mu=ib \ (b\in \mathbb{R} \setminus \{0\})$ be a zero point of
$f_{\theta d,k}$, i.e.,
  \be \label{eqn-ib-2}
  f_{\theta d,k}(ib)
  = 1-k e^{-ib} +\theta d (1-k)  \frac{1-e^{-ib}}{ib}
  =0.
  \ee
Property \eqref{eqn-ib-2} is equivalent to
\begin{align} \label{eqn-Im-2}
   & b (1-k \cos{b} ) + \theta d(1-k) \sin{b} =0,
      \\ \label{eqn-Re-2}
   & -k b \sin{b} +\theta d (1-k)(1-\cos{b})=0.
  \end{align}
Multiplying \eqref{eqn-Im-2} with $1-\cos b$ and \eqref{eqn-Re-2} with $\sin b$, one gets that
\be
\label{bsin2b}
   b(1-k\cos{b}) (1-\cos{b}) = -k b \sin^2{b},
\ee
Equality \eqref{bsin2b} is equivalent to
\beq b(1+k)(1-\cos{b}) =0,
\eeq
which implies $\cos{b}=1$, and thus $\sin{b}=0 $, since $b\neq 0$
and $|k| <1$. Substituting $\cos{b}=1$  and  $\sin{b}=0 $ into
\eqref{eqn-Im-2}, we get that $ k =1$. It is a contradiction with the
fact that $|k| <1$. This concludes the proof of \eqref{OKsurbordOmegaR}.

Property \eqref{OKsurbordOmegaR} and the degree theory for homotopic functions for
$H(\theta,d,k,\mu)$ give that
 \begin{align*}
 \mathrm{deg}(f_{d,k}(\mu),\Omega_R,0)
  &=\mathrm{deg}(H(1,d,k,\mu),\Omega_R,0) \\
   &= \mathrm{deg}(H(0,d,k,\mu),\Omega_R,0)
   =\mathrm{deg} (f_{0,k}(\mu), \Omega_R,0)=0.
 \end{align*}
Therefore, $f_{d,k}$ dose not vanish in $\Omega_R$ and further in the
right half plane $\{\mu\in \mathbb{C}| \Re(\mu)\geq 0 \}$,  namely, $s(A)\leq 0$.

Finally, we show that $s(A)<0$.  For any fixed  $d>-1$ and $k\in(-1,1)$,
there exists $r>0$ such that  $1-ke^{-\mu} \geq 1-|k| |e^{-\mu}| =1-|k| e^{-\Re(\mu)} >0$ for all $\mu\in \{\mu\in \mathbb{C} | \Re(\mu) \geq -r\}$.
Since $ f_{d,k}(\mu) \approx 1-ke^{-\mu} $ as $|\mu|$ tends to $+\infty$,
there exists then $R>0$ such that $ f_{d,k}(\mu) \neq 0$ for all  $\mu\in \{\mu\in \mathbb{C} | \Re(\mu) \geq -r, |\mu| > R\}$.
If $s(A) < -r$, we are done. Otherwise, $-r \leq s(A) \leq 0$, then $s(A)$ must be achieved by some
$\mu\in \{\mu\in \mathbb{C}  | f_{d,k}(\mu)=0, |\mu|\leq R, -r\leq \Re(\mu)\leq 0\} $ since $\mu \mapsto f_{d,k}(\mu)$ is continuous.
Note that $f_{d,k}(\mu)=0$ has no solution on the imaginary axis, we conclude that $s(A) <0 $ which concludes
 the proof of Lemma \ref{lem-spectral} and thus of
Theorem \ref{thm-linear}.
\qed

\subsection{Proof of Theorem \ref{thm-linear} for the case $d >-1$
and $|k|<1$ by a Lyapunov function approach}

\label{sect-Lyapu}

Here, using a Lyapunov function approach, we give another proof of the exponential stability if
$d >-1$ and $|k|<1$. Note that the solution map of Cauchy problem \eqref{cauchy-lin}
defines a $C_0$ semigroup $S(t)\ (t\geq 0)$ on $L^2(0,1)$, without loss of generality, it suffices to construct
the Lyapunov function for every classical (i.e. $C^1$) solutions. An important fact is that,
as we will see in Section \ref{secstabnonlinear}, the same Lyapunov function also works
for the nonlinear closed loop system.

We divide our proof into two cases: $|d|<1$ and $d\geq 1$.

\emph{Case 1: $|d|<1$.}

We construct a Lyapunov function as follows:
  \be \label{lyapunov-1}
   L(t):= \int_0^1 e^{-\beta x }\rho^2(t,x) dx+ a W^2(t),
   \quad \forall t\in [0,+\infty),
   \ee
where the constants $\beta>0$ and $a\in \R$ are chosen later. (The introduction of  $e^{-\beta x }$
is motivated by \cite{1999-Coron, CoronABastin07, XuCZ09, XuCZ02}.) By
the definition of $W(t)$ and the Cauchy-Schwarz inequality, we know
  \be\label{Wt-holder}  W^2(t) \leq  \int_0^1 e^{\beta x}  dx 
          \int_0^1 e^{-\beta x} \rho^2(t,x) dx
     = \frac{e^{\beta}-1}{\beta }  \int_0^1 e^{-\beta x} \rho^2(t,x) dx.  \ee
If
  \be \label{a-beta}  a> -\frac{\beta}{e^{\beta}-1}, \ee
$L(t)$ is positive definite for all $t\geq 0$ and there exists two
constants $C_i=C_i(a,\beta)>0 \ (i=1,2,3,4)$ such that
  \be \label{Lt-sim}
    \begin{split}
    C_1\|\rho(t,\cdot)\|^2_{L^2(0,1)}
    & \leq C_2 \int_0^1 e^{-\beta x} \rho^2(t,x) dx
    \leq  L(t)   \\
    & \hspace{10mm}
     \leq   C_3 \int_0^1 e^{-\beta x} \rho^2(t,x) dx
    \leq C_4\|\rho(t,\cdot)\|^2_{L^2(0,1)} ,
    \quad \forall t\in [0,+\infty).
       \end{split}
     \ee

Let us first compute the time derivative of $L$ along the solution of \eqref{cauchy-lin}. Note that
 \be \label{dot W} \dot W(t)=-\int_0^1 \rho_x(t,x) dx= \rho(t,0)-\rho(t,1).  \ee
It follows from \eqref{lyapunov-1}, \eqref{dot W} and \eqref{cauchy-lin}
that
  \begin{align*}
       \dot L(t)
   &= -\int_0^1 e^{-\beta x} (\rho^2(t,x))_x dx + 2a W(t) \dot W(t)
      \\
   &=  -\beta \int_0^1 e^{-\beta x} \rho^2(t,x)dx
     - \left [ e^{-\beta x} \rho^2(t,x) \right]^{x=1}_{x=0}
     + 2a W(t) (\rho(t,0)-\rho(t,1))
     \\
   & = -\beta \int_0^1 e^{-\beta x} \rho^2(t,x)dx
     + (k^2-e^{-\beta}) \rho^2(t,1)
     \\
   & \quad  +2(k-1)(a+kd)\rho(t,1)W(t)
    +d (k-1)\big(2a+(k-1)d\big) W^2(t).
   \end{align*}
Let $\beta>0$ be small enough so
that
\begin{gather}
\label{e-betak2}
e^{-\beta} >k^2.
\end{gather}
Then
  \begin{align*}
     \dot L(t)
   & = -\beta \int_0^1 e^{-\beta x} \rho^2(t,x)dx
     \\
   & \quad + ( k^2-e^{-\beta}) \left [ \rho(t,1)
               -\frac{(k-1)(a+kd)}{e^{-\beta}-k^2 } W(t) \right ] ^2
     \\
   &\quad  + \left [d (k-1) \big(2a+(k-1)d \big)
               +\frac{(k-1)^2(a+kd)^2}{e^{-\beta}-k^2 } \right ] W^2(t)
      \\
    & \leq  -\beta \int_0^1 e^{-\beta x} \rho^2(t,x)dx
           \\
   &\quad  + \frac{1-k }{e^{-\beta}-k^2} \,
    \big[ (1-k) a^2 +2 ad (k-e^{-\beta}) +e^{-\beta}(1-k) d^2\big] W^2(t)
          \\
    & =  -\beta \int_0^1 e^{-\beta x} \rho^2(t,x)dx
           \\
   &\quad  + \frac{ [ (k-1)a + (e^{-\beta}-k )d]^2 }{e^{-\beta}-k^2} \, W^2(t)
                 + d^2 (1- e^{-\beta}) W^2(t).
   \end{align*}

Take
 \be \label{a-def}
 a := \frac{e^{-\beta}-k}{1-k}\, d,  \ee
which verifies \eqref{a-beta} since $|d|<1$ and \eqref{e-betak2}. Moreover,  by \eqref{Wt-holder} and \eqref{a-def}, we get
\begin{gather}
\label{estdotL<}
     \dot L(t)
  \leq  -\beta \big[1- d^2 (e^{\beta}-1)^2 e^{-\beta} \beta^{-2}\big]
     \int_0^1 e^{-\beta x} \rho^2(t,x)dx.
\end{gather}
As $\beta \rightarrow 0+$,  $1- d^2
(e^{\beta}-1)^2 e^{-\beta} \beta^{-2} \rightarrow 1-d^2>0$. Hence, taking $\beta>0$ small enough,
we may assume that
\begin{gather}
\label{ebeta-1}
1-d^2(e^{\beta}-1)^2 e^{-\beta} \beta^{-2}>0
\end{gather}
 From \eqref{estdotL<} and \eqref{ebeta-1}, one has
\beq
\dot L (t) \leq  -\frac{\beta}{C_3} \big[1- d^2 (e^{\beta}-1)^2 e^{-\beta}
     \beta^{-2}\big] L(t)
\eeq
Therefore, there exists a constant $\alpha=\alpha(\ov \rho,k)>0$
such that
   \be \label{dotL-est} \dot L(t)  \leq  -\alpha L(t),
      \quad \forall t\in [0,+\infty).
   \ee
Finally, we conclude from the fact that
    \be \label{L0-est}
   L(0) \leq
    C_3 \int_0^1 e^{-\beta x} \rho_0^2(x) dx
     \leq  C_3  \|\rho_0\|^2_{L^2(0,1)} \ee
and \eqref{dotL-est} that there exists a constants $C=C(\ov \rho,k)>0$ such
that \eqref{decay-rho} holds. This concludes the proof of Theorem
\ref{thm-linear} for the case $|d|<1$.

\emph{Case 2. $d\geq 1$.}

In this case, the construction of
Lyapunov function is not as direct as in the case $|d|<1$.

Let
 \beq
 V_1(t):=\int_0^1 \rho^2(t,x) dx +b W^2(t),
    \quad \forall t\in [0,+\infty),
 \eeq
where $b\in (0,+\infty)$ is a constant to be determined. We compute
the time derivative of $V_1$ along the solution of \eqref{cauchy-lin}. One has
\begin{align*}
     \dot V_1(t)
   &:= -\int_0^1 (\rho^2(t,x))_x dx +2b W(t) \dot W(t)
    \\
   &= \rho^2(t,0)-\rho^2(t,1) +2b W(t) (\rho(t,0)-\rho(t,1))
     \\
   &= (k^2-1)\rho^2(t,1) +2(k-1)(b+ kd) \rho(t,1)W(t) + d(k-1) [2b+(k-1)d] W^2(t).
\end{align*}
Since $k \in (-1,1)$, in order to then deduce that  $\dot V_1(t)\leq 0$, it suffices to require that
  \beq 4(k-1)^2(b+ kd)^2- 4 d(k^2-1)(k-1) [2b+(k-1)d] \leq 0, \eeq
that is,
  \beq 4(k-1)^2(b-d)^2 \leq 0. \eeq
Taking
   \beq b:=d, \eeq
readily gives
  \begin{align} \label{def-V1}
 & V_1(t)=\int_0^1 \rho^2(t,x) dx + d W^2(t),
\\
 \label{dotV1=}
 &  \dot V_1(t) =(k^2-1)(\rho(t,1)+d W(t))^2.
 \end{align}

 Let
    \be \label{def-xi}
    \xi(t,x):=\rho(t,x)+ d W(t),
       \quad  t\in (0,+\infty), x\in(0,1).
     \ee
By \eqref{dotV1=} and \eqref{def-xi}
    \be  \label{dot V1}
    \dot V_1(t) =(k^2-1) \xi^2(t,1).
    \ee
 From \eqref{cauchy-lin} and \eqref{def-xi}, one gets that $\xi$ satisfies the following Cauchy
problem
  \be \label{cauchy-xi}
 \begin{cases}
 \xi_t(t,x)+\xi_x(t,x)=d \dot W(t),
     \quad t\in (0,+\infty),  x\in (0,1),\\
  \xi(0,x)=\rho_0(x)-\ov\rho+ d W(0),  \quad x\in (0,1),\\
  \xi(t,0)=k \xi(t,1), \quad t\in (0,+\infty).
 \end{cases}
 \ee
Let
  \be \label{def-V2}
  V_2(t):=\int_0^1 e^{-x} \xi^2(t,x) dx,
     \quad \forall t\in [0,+\infty). \ee
Then
 \begin{align}
     \dot V_2(t)
   &:= 2 \int_0^1 e^{-x} \xi(t,x) \xi_t(t,x)) dx \nonumber
     \\
   &= 2 \int_0^1 e^{-x} \xi(t,x) (-\xi_x(t,x)+d \dot W(t)) dx \nonumber
    \\
   &=  -\Big [ e^{-x} \xi^2(t,x) \Big]_{x=0}^{x=1}
        - \int_0^1 e^{-x} \xi^2(t,x) dx +2 d \dot W(t) \int_0^1 e^{-x}
        \xi(t,x) dx \nonumber
   \\
     &=  - e^{-1} \xi^2(t,1) + \xi^2(t,0)
          -V_2(t)  +2 d (\xi(t,0)- \xi(t,1)) \int_0^1 e^{-x}
        \xi(t,x) dx \nonumber
     \\
   &= (k^2-e^{-1})\xi^2(t,1)-V_2(t) +2 d (k-1) \xi(t,1)  \int_0^1 e^{-x}
        \xi(t,x) dx.\label{dotV2=}
    \end{align}
By the Cauchy-Schwarz inequality,
   \be \label{hold V2}
   \Big | \int_0^1 e^{-x} \xi(t,x) dx \Big|^2
       \leq (1- e^{-1})  V_2(t). \ee
 From \eqref{dotV2=} and \eqref{hold V2}, there exists a constant
$A=A(d,k)>0$ such that
  \be\label{dot V2}
     \dot V_2(t)
      \leq  - \frac 12 V_2(t) + A \xi^2(t,1).
    \ee

Let
  \be \label{def-V}
   V(t):= \frac{2A}{1-k^2} V_1(t)+V_2(t),
      \quad \forall t\in [0,+\infty). \ee
Combining  \eqref{dot V1}, \eqref{dot V2} and \eqref{def-V}, one has
  \be \label{dot Vt}
   \dot V(t) = \frac{2A}{1-k^2} \dot V_1(t)+ \dot V_2(t) \leq - \frac 12 V_2(t),
   \quad \forall t\in [0,+\infty).   \ee
Notice that
   \beq  \int_0^1  \xi^2(t,x) dx =\int_0^1 \rho^2(t,x) dx +(2d+d^2)
   W^2(t)    \eeq
and
   \beq  W^2(t)=\Big( \int_0^1 \rho(t,x) dx \Big)^2 \leq \int_0^1 \rho^2(t,x)
   dx.   \eeq
Obviously, there exist constants $B_i=B_i(d)>0 \ (i=1,2,3,4)$ such
that
  \be \label{Vt-sim}
   B_1 \|\rho(t,\cdot)\|^2_{L^2(0,1)}  \leq B_2 V_2(t) \leq  V(t)
  \leq  B_3 V_2(t) \leq B_4 \|\rho(t,\cdot)\|^2_{L^2(0,1)},
     \quad \forall t\in [0,+\infty).   \ee
Now we  conclude from \eqref{dot Vt} and \eqref{Vt-sim} that
there exist constants $\alpha=\alpha(\ov \rho,k)$ and $C=C(\ov\rho,k)$ such that
  \be \label{dot Vt-fin}
  \dot V(t) \leq -\alpha V(t),
     \quad \forall t\in [0,+\infty)
   \ee
and finally  \eqref{decay-rho} holds. This finishes the proof of
Theorem \ref{thm-linear} for the case $d \geq 1$. \qed

\section{Stabilization to $\ov \rho$ for the nonlinear system}
\label{secstabnonlinear}

In this section, we stabilize the nonlinear system to an equilibrium $\ov
\rho \in\mathbb{R}$ by using Lyapunov function approach.
By Lemma \ref{lem-wellposed} and Lemma \ref{lem-regular}, it suffices to construct
Lyapunov functions for classical solutions.
We will divide
our main results into two cases: $\ov \rho=0$ and $\ov \rho \neq 0$. We
will see later that the situation of $\ov \rho \neq 0$ is much more
complicated than that of $\ov\rho=0$ which implies $d=0$.

\subsection{Exponential stability of $0$ with  a Lyapunov function approach}

In this subsection, we prove a stabilization result for the case that $\ov
\rho=0$: we give explicit feedback laws leading to  semi-global exponential stability of $\ov
\rho=0$.

\begin{thm} \label{thm-stab-0}
For every $k\in (-1,1)$ and every $R>0$, there exist
constants $C=C(k,R)>0$ and $\alpha=\alpha(k,R)>0$  such that for any
$\rho_0\in L^2(0,1)$ with
  \be \label{rho0-bound} \|\rho_0\|_{L^1(0,1)} \leq R, \ee
the solution $\rho\in C^0([0,+\infty);L^2(0,1))$ to the Cauchy
problem
 \be\label{cauchy-0}
 \begin{cases}
 \rho_t(t,x)+(\rho(t,x)\lambda(W(t)))_x=0, \quad t\in (0,+\infty),  x\in
 (0,1),\\
  \rho(0,x)=\rho_0(x),  \quad x\in (0,1),\\
  u(t)=ky(t), \quad t\in (0,+\infty)
 \end{cases}
 \ee
satisfies
 \be \label{decay-0}
 \|\rho(t,\cdot)\|_{L^2(0,1)} \leq C e^{-\alpha t} \|\rho_0\|_{L^2(0,1)},
 \quad \forall t\in [0,+\infty). \ee
\end{thm}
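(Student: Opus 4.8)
The plan is to combine an a priori $L^1$ bound---which controls the nonlocal velocity---with the Lyapunov function already built for the linearized problem in the case $d=0$. Since $\ov\rho=0$ gives $d=0$, and $\lambda>0$ makes the feedback $u=ky$ equivalent to the boundary condition $\rho(t,0)=k\rho(t,1)$, the system \eqref{cauchy-0} is exactly the nonlinear analogue of \eqref{cauchy-lin} with $d=0$. By Lemma \ref{lem-wellposed} and Lemma \ref{lem-regular} it suffices to treat classical solutions. The only genuine difference from the linear proof of Section \ref{sect-Lyapu} is that the transport speed $\lambda(W(t))$ is now time-dependent and, a priori, merely positive; the whole argument hinges on producing a uniform positive lower bound for it, and this is where the hypothesis $\|\rho_0\|_{L^1}\le R$ enters.

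First I would establish that $\|\rho(t,\cdot)\|_{L^1(0,1)}\le\|\rho_0\|_{L^1(0,1)}$ for all $t\ge 0$. The cleanest route is to introduce the arc-length variable $\sigma(t):=\int_0^t\lambda(W(\tau))\,d\tau$, which is strictly increasing because $\lambda>0$; setting $\tilde\rho(\sigma(t),x):=\rho(t,x)$ turns \eqref{cauchy-0} into the unit-speed transport $\tilde\rho_\sigma+\tilde\rho_x=0$ with the same boundary condition $\tilde\rho(\sigma,0)=k\tilde\rho(\sigma,1)$ (equivalently, one may read this off the representation formula \eqref{rho}, in which the $(1-k)\ov\rho\lambda(\ov\rho)$ term is absent since $\ov\rho=0$). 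For this unit-speed problem the solution is constant along characteristics and, over each unit period, the incoming trace is the outgoing trace multiplied by $k$; a direct computation shows the $L^1$ norm is multiplied by $|k|$ over one period, hence is non-increasing in $\sigma$. This estimate needs only $\lambda>0$, so no circularity with the lower bound arises. Consequently $|W(t)|\le\|\rho(t,\cdot)\|_{L^1}\le\|\rho_0\|_{L^1}\le R$ for all $t$, and by continuity and positivity of $\lambda$,
\[
\lambda(W(t))\ge \lambda_R:=\min_{|w|\le R}\lambda(w)>0,\qquad \forall t\ge 0.
\]

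Next I would run the Lyapunov argument of Section \ref{sect-Lyapu} with $a=0$, the value forced by \eqref{a-def} when $d=0$; that is, $L(t):=\int_0^1 e^{-\beta x}\rho^2(t,x)\,dx$, with $\beta>0$ small enough that $e^{-\beta}>k^2$ (possible since $|k|<1$). Differentiating along \eqref{cauchy-0}, integrating by parts, and combining the boundary terms via $\rho(t,0)=k\rho(t,1)$ gives
\[
\dot L(t)=-\lambda(W(t))\Big((e^{-\beta}-k^2)\rho^2(t,1)+\beta\,L(t)\Big)\le -\beta\,\lambda(W(t))\,L(t)\le -\beta\,\lambda_R\,L(t).
\]
Since $e^{-\beta}\|\rho(t,\cdot)\|_{L^2}^2\le L(t)\le\|\rho(t,\cdot)\|_{L^2}^2$, Gronwall's inequality then yields $\|\rho(t,\cdot)\|_{L^2}\le e^{\beta/2}\,e^{-\alpha t}\|\rho_0\|_{L^2}$ with $\alpha:=\beta\lambda_R/2$, which is precisely \eqref{decay-0} with $C=C(k,R)$ and $\alpha=\alpha(k,R)$ as required.

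The main obstacle is the a priori $L^1$ estimate, since everything downstream---in particular the uniform velocity bound $\lambda_R$---rests on it; once $\lambda(W(t))$ is bounded below by a positive constant, the Lyapunov computation is essentially identical to the linear case $|d|<1$. It is also exactly this $L^1$ bound that makes the result semi-global (valid on $\{\|\rho_0\|_{L^1}\le R\}$) rather than global: the constant $\lambda_R$, and hence the decay rate $\alpha$, degrades as $R\to\infty$.
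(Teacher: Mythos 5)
Your proposal is correct and follows essentially the same route as the paper: the weighted Lyapunov function $L(t)=\int_0^1 e^{-\beta x}\rho^2(t,x)\,dx$ with $e^{-\beta}>k^2$, combined with the a priori bound $|W(t)|\le\|\rho(t,\cdot)\|_{L^1}\le\|\rho_0\|_{L^1}\le R$ to obtain a uniform positive lower bound on $\lambda(W(t))$ and hence a Gronwall decay. The only (minor) divergence is the justification of the $L^1$ monotonicity: the paper shows $\frac{d}{dt}\int_0^1|\rho(t,x)|^r\,dx\le 0$ for $r\in(1,2]$ and lets $r\to 1^+$, whereas you rescale time to unit-speed transport and argue along characteristics; both are valid.
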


\begin{proof}

Still motivated by the Lyapunov functions used in \cite{1999-Coron, CoronABastin07, XuCZ09, XuCZ02},
we introduce the following Lyapunov function which is an weighted $L^2(0,1)$ norm of the solution:
 \be \label{Lt} L(t):= \int_0^1 e^{-\beta x} \rho^2(t,x) dx,
 \quad \forall t \in [0,+\infty),
  \ee
where $\beta>0$ is a positive constant to be determined. Then, along the trajectories of \eqref{cauchy-0} (see also \eqref{influx} and \eqref{outflux}),
 \beq
  \begin{split}
 \dot L(t)
   = &    \int_0^1 e^{-\beta x} (\rho^2(t,x))_t dx
     \\
   = & -  \lambda(W(t)) \int_0^1 e^{-\beta x} (\rho^2(t,x))_x dx
    \\
   =&  -  \beta \lambda(W(t)) L(t)
       - \lambda(W(t))  \Big[ e^{-\beta x} \rho^2(t,x)  \Big ]_{x=0}^{x=1}
         \\
   =&   - \beta\lambda(W(t)) L(t)
      + \lambda(W(t)) \big( \rho^2(t,0)- e^{-\beta} \rho^2(t,1) \big)
     \\
   =&   - \beta \lambda(W(t)) L(t)
     + (\lambda(W(t)))^{-1} \big(k^2- e^{-\beta} \big ) y^2(t).
   \end{split}
   \eeq

Since $k\in (-1,1)$, one can choose $\beta>0$ so that $e^{-\beta}>k^2$ and thus
  \be  \label{dL-est}
   \dot L(t) = - \beta \lambda(W(t)) L(t)\leq 0.
   \ee

In order to get exponential decay of the solution as $t\rightarrow +\infty$, it suffices to prove
the uniform boundedness of $W(\cdot)$. In fact,
\begin{gather}
\label{Wdecreasing}
t\mapsto \int_0^1|\rho(t,x)|dx \text{ is a nonincreasing function}.
\end{gather}
Property \eqref{Wdecreasing} can be proved by checking that, for every $r\in (1,2]$,
\begin{gather}
\label{derivater<0}
\frac{d}{dt}\int_0^1|\rho(t,x)|^rdx\leqslant 0,
\end{gather}
 From \eqref{derivater<0}, one gets that, for every $r\in (1,2]$,
\begin{gather*}
t\mapsto \int_0^1|\rho(t,x)|^rdx \text{ is a nonincreasing function},
\end{gather*}
which gives \eqref{Wdecreasing} by letting $r\rightarrow 1^+$.
 From \eqref{Wdecreasing}, one gets that
\be \label{Wt-bound}
  |W(t)| \leq \|\rho_0\|_{L^1(0,1)} \leq R, \quad \forall t\in
  [0,+\infty).
\ee
Let us define a constant
   \be \label{b}
     b:=\inf_{|s| \leq  R} \lambda(s) >0.  \ee
Then we get from \eqref{rho0-bound}, \eqref{dL-est},
\eqref{Wt-bound} and \eqref{b} that
  \beq
   \dot L(t) \leq -b  \beta  L(t),
    \quad \forall t\in [0,+\infty). \eeq
Therefore we obtain
  \beq
  L(t) \leq L(0) e^{-b \beta t}
   =\int_0^1 e^{-\beta x} \rho_0^2(x) dx \, e^{-b  \beta t}
   \leq \|\rho_0\|^2_{L^2(0,1)} \,  e^{-b \beta t}.
    \eeq
This concludes the proof of Theorem \ref{thm-stab-0}.
\end{proof}

\begin{rem} It is easy to see that if we let $k=0$, i.e., the
feedback law is chosen as
 \beq u(t)=0, \quad t\in (0,+\infty), \eeq
then the state reaches zero after a finite time no matter what the
initial data is. This fact shows that zero control does drive the
state to zero in finite time.
\end{rem}

\subsection{Exponential stability of $\ov\rho \neq0$ with a Lyapunov function
approach}
In this subsection, our main result is the following one.
\begin{thm} \label{thm-stab}
Assume that $d> -1$. Let $k\in (-1,1)$.
Then there exist constants
$\vep=\vep(\ov\rho,k)>0$, $C=C(\ov\rho,k)>0$ and $\alpha=\alpha(\ov\rho,k)>0$ such
that the following holds: For every $\rho_0\in L^2(0,1)$
with
\be \label{rho0-vep} \|\rho_0(\cdot)-\ov\rho\|_{L^2(0,1)} \leq \vep,
\ee
the weak solution $\rho\in C^0([0,+\infty);L^2(0,1))$ to the Cauchy
problem
 \be\label{cauchy-3}
 \begin{cases}
 \rho_t(t,x)+(\rho(t,x)\lambda(W(t)))_x=0, \quad t\in (0,+\infty),  x\in
 (0,1),\\
  \rho(0,x)=\rho_0(x),  \quad x\in (0,1),\\
  u(t)-\ov \rho  \lambda(\ov \rho)=k(y(t)-\ov\rho  \lambda(\ov \rho)), \quad t\in (0,+\infty)
 \end{cases}
 \ee
satisfies
 \be \label{decay-NL}
 \|\rho(t,\cdot)-\ov \rho\|_{L^2(0,1)} \leq C e^{-\alpha t} 
    \|\rho_0(\cdot)-\ov \rho\|_{L^2(0,1)},
 \quad \forall t\in [0,+\infty). \ee

\end{thm}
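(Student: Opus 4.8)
The plan is to exploit the remark, emphasized at the end of Section~\ref{sect-Lyapu}, that the Lyapunov functions built for the linearized problem also govern the nonlinear dynamics. By Lemma~\ref{lem-wellposed} and Lemma~\ref{lem-regular} it suffices to establish the decay estimate for classical ($C^1$) solutions and then pass to general $L^2$ data by the continuity of the solution maps \eqref{map1}--\eqref{map2}. As in Section~\ref{section-spect} I normalize $\lambda(\ov\rho)=1$, so that $d=\ov\rho\lambda'(\ov\rho)$, and I work with the perturbation $\wt\rho:=\rho-\ov\rho$, $\wt W:=\int_0^1\wt\rho\,dx=W-\ov\rho$. Since $\lambda(W)$ depends on $t$ only, $\wt\rho$ solves $\wt\rho_t+\lambda(W)\wt\rho_x=0$ with $\dot{\wt W}=\lambda(W)(\wt\rho(t,0)-\wt\rho(t,1))$, and rewriting the feedback $u-\ov\rho\lambda(\ov\rho)=k(y-\ov\rho\lambda(\ov\rho))$ in terms of $\wt\rho$ gives, with $W=\ov\rho+\wt W$,
\[
\wt\rho(t,0)=k\wt\rho(t,1)+(k-1)\,g(\wt W(t)),\qquad g(\wt W):=\ov\rho\,\frac{\lambda(W)-1}{\lambda(W)}.
\]
The crucial point is that $g(0)=0$ and $g'(0)=\ov\rho\lambda'(\ov\rho)=d$, so $g(\wt W)=d\,\wt W+O(\wt W^2)$: the nonlinear boundary law reproduces the linear one $\wt\rho(t,0)=k\wt\rho(t,1)+(k-1)d\,\wt W$ up to a quadratic remainder.

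For the case $|d|<1$ I would use $L(t)=\int_0^1 e^{-\beta x}\wt\rho^2\,dx+a\wt W^2$ with the same $\beta$ and the same $a$ as in \eqref{a-def}. Differentiating along the nonlinear flow and integrating by parts, every bulk and boundary term carries a common factor $\lambda(W)$, so that
\[
\dot L(t)=\lambda(W)\Big\{-\beta\!\int_0^1 e^{-\beta x}\wt\rho^2\,dx+\wt\rho^2(t,0)-e^{-\beta}\wt\rho^2(t,1)+2a\wt W\big(\wt\rho(t,0)-\wt\rho(t,1)\big)\Big\}.
\]
Inserting the nonlinear boundary law splits the brace into exactly the quadratic form treated in the linear case (which, after completing the square in $\wt\rho(t,1)$ using $e^{-\beta}>k^2$ and applying \eqref{Wt-holder}, is bounded as in \eqref{estdotL<} by $-\beta[1-d^2(e^\beta-1)^2e^{-\beta}\beta^{-2}]\int e^{-\beta x}\wt\rho^2$) plus a remainder generated by $g(\wt W)-d\wt W=O(\wt W^2)$. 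Since $|\wt W|\le\|\wt\rho\|_{L^2}\le C\sqrt{L}$, most remainder terms are $O(L^{3/2})$; the single term linear in the uncontrolled boundary trace $\wt\rho(t,1)$ is absorbed by Young's inequality into the strictly negative coefficient $(k^2-e^{-\beta})$ of $\wt\rho^2(t,1)$, at the cost of an $O(\sqrt L)$ perturbation of the $\wt W^2$ coefficient and a negligible $O(L^2)$ term. Using that $\lambda(W)$ stays between positive constants for $\wt W$ small, this yields $\dot L\le-\alpha L+CL^{3/2}$.

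For $d\ge1$ I would repeat the construction of Section~\ref{sect-Lyapu} with $\wt\xi:=\wt\rho+d\wt W$, $V_1=\int_0^1\wt\rho^2\,dx+d\wt W^2$, $V_2=\int_0^1 e^{-x}\wt\xi^2\,dx$ and $V=\frac{2A}{1-k^2}V_1+V_2$. Here $\wt\xi$ satisfies $\wt\xi_t+\lambda(W)\wt\xi_x=d\dot{\wt W}$ with $\wt\xi(t,0)=k\wt\xi(t,1)+(k-1)\big(g(\wt W)-d\wt W\big)=k\wt\xi(t,1)+O(\wt W^2)$, so the nonlinear system reproduces \eqref{cauchy-xi} up to quadratic boundary remainders and an overall $\lambda(W)$ factor in the transport and boundary terms. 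The computations leading to \eqref{dot Vt} then give $\dot V\le-\tfrac12\lambda(W)V_2+(\text{higher order})$, where the higher-order boundary-trace remainder is absorbed into the leftover negative term $-A\wt\xi^2(t,1)$ produced by the $V_1$-weighting (which is available precisely because $|k|<1$). By \eqref{Vt-sim} this is again of the form $\dot V\le-\alpha V+CV^{3/2}$.

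In both cases the conclusion follows from a standard continuity (bootstrap) argument: choosing $\vep>0$ small makes $L(0)\le C\vep^2$ (resp.\ $V(0)$) small, and on any interval where the functional stays below a fixed threshold the inequality $\dot L\le-\alpha L+CL^{3/2}\le-\tfrac{\alpha}{2}L$ holds, so the functional decreases and the threshold is never reached; hence $\dot L\le-\tfrac{\alpha}{2}L$ for all $t\ge0$, and the equivalence \eqref{Lt-sim} (resp.\ \eqref{Vt-sim}) between the functional and $\|\wt\rho\|_{L^2}^2$ gives \eqref{decay-NL}. I expect the main obstacle to be the rigorous control of these quadratic boundary remainders: the term linear in the uncontrolled trace $\wt\rho(t,1)$ must be absorbed into the negative boundary-trace square without destroying the strict-negativity margins that were only barely positive in the linear analysis. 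Moreover, because $\lambda(W)$ multiplies the bulk term and the $\wt W^2$ term with its minimum and maximum respectively, the final Cauchy--Schwarz absorption \eqref{Wt-holder} requires the ratio $\sup\lambda/\inf\lambda$ over the range of $W$ to stay close to $1$; this is what forces $\vep$ small and preserves the decisive margin $d^2<1$.
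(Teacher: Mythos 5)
Your proposal is correct and follows essentially the same route as the paper: normalize $\lambda(\ov\rho)=1$, pass to the perturbation $\wt\rho=\rho-\ov\rho$, reuse the linear Lyapunov functionals \eqref{lyapunov-1} (with $a$ as in \eqref{a-def}) for $|d|<1$ and $V=\frac{2A}{1-k^2}V_1+V_2$ for $d\ge 1$, and absorb the nonlinear boundary remainder by a smallness/continuity argument. The only caveat is that, since $\lambda$ is merely $C^1$, the remainder $g(\wt W)-d\,\wt W$ is $o(\wt W)$ rather than $O(\wt W^2)$, so the differential inequality closes as $\dot L\le(-\alpha+o(1))L$ (as in \eqref{estimate-dot-L} and \eqref{dotVestnonlinear}) rather than $\dot L\le-\alpha L+CL^{3/2}$; this does not affect the bootstrap.
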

%JM->Z. In the previous version, the assumption \eqref{lam-conv} was a global one.
% But I think we do not need a global assumption since our result is purely local.
%Please check the modifications I made for the proof of Theorem \ref{thm-stab}: essentially you can see them by
%looking for the o(1).

\begin{proof}
As we did in Section \ref{section-spect} and Section \ref{sect-Lyapu}, we may assume that
$ \lambda(\ov \rho)=1$,
which, together with \eqref{def-d}, yields
$ d=\ov \rho\lambda'(\ov \rho)$.

Let
 \begin{align} \nonumber
 & {\wt\rho}(t,x):=\rho(t,x)-\ov\rho,\quad  {\wt W}(t):=W(t)-\ov\rho, \quad  {\wt\rho}_0(x):=\rho_0(x)-\ov\rho,    \\ \nonumber
 & \wt \lambda(t) :=\lambda(\ov\rho +\wt W(t)),
    \quad \wt u(t):=\wt \lambda(t) \wt \rho(t,0), \quad \wt y(t):= \wt \lambda(t) \wt \rho(t,1).
\end{align}
The system \eqref{cauchy-3} is then rewritten as follows
 \be \label{NL-control-tilde}
 \begin{cases}
 {\wt\rho}_t(t,x)+\wt \lambda(t)\wt\rho_x(t,x)=0,
     \quad t\in (0,+\infty),  x\in (0,1),\\
  {\wt\rho}(0,x)=\wt\rho_0(x),  \quad x\in (0,1),\\
  {\wt u}(t)=k {\wt y}(t) +(k-1)\ov \rho(\wt \lambda(t)-1), \quad t\in (0,+\infty).
 \end{cases}
 \ee
Until the end of the proof of theorem \ref{thm-stab}, we omit the symbol $\wt{}\ $. In particular we
rewrite \eqref{NL-control-tilde} as the following system
 \be \label{NL-control}
 \begin{cases}
 \rho_t(t,x)+\lambda(t) \rho_x(t,x)=0,
     \quad t\in (0,+\infty),  x\in (0,1),\\
   \rho(0,x)=\rho_0(x),  \quad x\in (0,1),\\
    \displaystyle
  u(t)=k   y(t) +(k-1)\ov \rho(\lambda(t)-1), \quad t\in (0,+\infty),
 \end{cases}
 \ee
where
 \beq W(t) =\int_0^1 \rho(t,x) dx, \quad \lambda(t)=\lambda(\ov\rho +W(t)).
 \eeq
Correspondingly, the assumption \eqref{rho0-vep}
and the exponential decay estimate \eqref{decay-NL} become
 \begin{align} \label{ass-vep}
   &  \|\rho_0\|_{L^2(0,1)} \leq \vep,
  \\
     \label{decay-rho-NL}
 &\|\rho(t,\cdot)\|_{L^2(0,1)} \leq C e^{-\alpha t} 
    \|\rho_0\|_{L^2(0,1)},
 \quad \forall t\in [0,+\infty).
 \end{align}

Similar to the linear case, we divide our proof into two cases:
$|d|<1$ and $d\geq 1$.

\emph{Case 1: $|d|<1$.}

We define a Lyapunov function  by \eqref{lyapunov-1}, where $a$ is given by \eqref{a-def}
and $\beta>0$ is taken small enough so that \eqref{a-beta} and \eqref{e-betak2} hold. Then, $L(t)$ is positive
definite for every $t\geq 0$ and there exist four constants
$C_i=C_i(d,k,\beta)>0 \ (i=1,2,3,4)$ such that \eqref{Lt-sim} holds.

Let us compute the time derivative of $L(t)$ for any classical solution of \eqref{NL-control}. Note that
\be\label{dot W-2}
  \dot W(t) =\int_0^1 \rho_t(t,x) dx
  = \lambda(t) (\rho(t,0)-\rho(t,1)) = u(t)-y(t).
\ee
We get, from \eqref{lyapunov-1},
\eqref{NL-control} and \eqref{dot W-2}, that
 \be \label{dot L-A1}
  \begin{split}
       \dot L(t)
   &= -\lambda(t) \int_0^1 e^{-\beta x} (\rho^2(t,x))_x dx + 2a W(t) \dot W(t)
      \\
   &=  -\beta \lambda(t) \int_0^1 e^{-\beta x} \rho^2(t,x)dx
      + \frac{ u^2(t)- e^{-\beta} y^2(t)}{\lambda(t)} + 2a W(t)(u(t)-y(t))
     \\
  & = -\beta \lambda(t) \int_0^1 e^{-\beta x} \rho^2(t,x)dx
       +A_1,
   \end{split}
   \ee
where
\begin{align}
A_1
   &=  \frac{ u^2(t)- e^{-\beta} y^2(t)}{\lambda(t)}
   + 2a W(t)(u(t)-y(t))\nonumber
      \\
   &=  \frac{ [ky(t)+(k-1)\ov\rho(\lambda(t)-1)]^2- e^{-\beta} y^2(t)}{\lambda(t)}
    +2a (k-1) W(t) [y(t)+\ov\rho(\lambda(t)-1)]\nonumber
       \\
&= \frac{k^2-e^{-\beta}}{\lambda(t)}
    \cdot \left [ y(t)+ \frac {(k-1)[k\ov\rho(\lambda(t)-1)+a \lambda(t) ]W(t)} {k^2-e^{-\beta}}\right]^2
     - \frac{ (k-1)^2 a^2 \lambda^2(t)W^2(t)}{\lambda(t)(k^2-e^{-\beta})}\nonumber
     \\
     &\quad  - \frac{ 2a(e^{-\beta}-k)(k-1) \lambda(t)W(t) \ov\rho(\lambda(t)-1)
         +e^{-\beta}(k-1)^2\ov\rho^2(\lambda(t)-1)^2}{\lambda(t)(k^2-e^{-\beta})}
         \label{eqA1}.
\end{align}
Since $\lambda$ is of class $C^1$,  one has
 \be \label{lam-t}
 \lambda(t)=\lambda(\ov \rho + W(t))
 = 1+  \lambda'(\ov \rho) W(t)+o(1) W(t),
  \quad \forall t\in [0,+\infty).  \ee
Here and hereafter, we denote $o(1)$ for various quantities (may be
different in different situations) satisfying the following property
\be\label{D}
\forall \delta>0, \,  \exists \vep>0 \text{ such that } \Big((|W(t)|\leq \vep \Rightarrow |o(1)|\leq \delta ), \quad \forall t\in [0,+\infty)\Big).
\ee
Then,
using \eqref{a-def}, \eqref{eqA1} and  \eqref{lam-t}, one has
  \begin{align}
       A_1
   &\leq   \frac{ [(k-1)^2 a^2  +2a(e^{-\beta}-k)(k-1)d +e^{-\beta}(k-1)^2 d^2 ]W^2(t)
       +o(1)W^2(t)}{(e^{-\beta}-k^2)(1+o(1))}\nonumber
         \\
      &\leq [d^2 (1-e^{-\beta})+o(1)]W^2(t).\label{A1-est}
   \end{align}
Combining \eqref{Wt-holder},  \eqref{Lt-sim}, \eqref{dot L-A1},  \eqref{lam-t},   \eqref{D} and \eqref{A1-est},  we get
  \begin{align} \dot L(t)
   & \leq
  \left (-\beta \lambda(t)
     + d^2(e^{\beta}-1)^2 e^{-\beta}\beta^{-1}+ o(1)\right)
     \int_0^1 e^{-\beta x} \rho^2(t,x)dx\nonumber
     \\
  & \leq -\frac{\beta}{C_3} \left(1-  d^2(e^{\beta}-1)^2 e^{-\beta}\beta^{-2} +o(1) \right)
    L(t).\label{estimate-dot-L}
  \end{align}

 From the fact that $1- d^2(e^{\beta}-1)^2 e^{-\beta}\beta^{-2}
\rightarrow  1-d^2 > 0$ as $ \beta \rightarrow 0^+$, we have, for $\beta>0$ small enough,
\begin{gather}
\label{condition2beta}
1- d^2(e^{\beta}-1)^2 e^{-\beta}\beta^{-2}>0.
\end{gather}
Finally, we take $\beta=\beta(\overline \rho,k)>0$ small enough so that \eqref{a-beta} and \eqref{condition2beta} hold.
Let us also emphasize that, from the Cauchy-Schwarz inequality and \eqref{Lt-sim}, there exits $C=C(\overline \rho,k)>0$ such that
\begin{gather}
\label{compareLetV}
|W(t)|^2\leq C L(t),
\end{gather}
 which, together with \eqref{ass-vep}, \eqref{D}, \eqref{estimate-dot-L} and \eqref{condition2beta}
 concludes the proof of Theorem \ref{thm-stab} in the case  $|d|<1$.

\emph{Case 2: $d\geq 1$.}

In this case, we are going to prove that
the Lyapunov function in the type of \eqref{def-V} still works for
the nonlinear control system \eqref{NL-control}.

Let's first compute the time derivative of $V_1(t)$ (see \eqref{def-V1} for definition),
 using  \eqref{def-xi}, \eqref{NL-control}, \eqref{dot W-2} and \eqref{lam-t}:
  \begin{align*}
       \dot V_1(t)
   &= -\lambda(t) \int_0^1 (\rho^2(t,x))_x dx + 2d W(t) \dot W(t)
      \\
   &= \lambda(t) [\rho^2(t,0) -\rho^2(t,1)   + 2 d W(t)(\rho(t,0) -\rho(t,1))]
     \\
    &=  \frac{ [ky(t)+(k-1)\ov \rho(\lambda(t)-1)]^2 -y^2(t)}{ \lambda(t)}
     + 2 d (k-1)W(t)(y(t)+\ov \rho (\lambda(t)-1))
     \\
     &=  \lambda(t) (k^2-1)(\rho(t,1)+dW(t))^2+[o(1) W(t)] \rho(t,1)
      +o(1)W^2(t)
      \\
     &=  \lambda(t) (k^2-1)\xi^2(t,1) +o(1) W(t) \xi(t,1)
        +o(1) W^2(t).
\end{align*}
Thanks to the Cauchy-Schwarz inequality and \eqref{Vt-sim}, it follows that
\be \label{dot V1-NL}
  \dot V_1 \leq   (k^2-1+o(1))\xi^2(t,1)  + o(1) W^2(t).
\ee
By the definition of $\xi$, it is easy to get that $\xi$ satisfies the following
Cauchy problem
  \be \label{cauchy-xi-NL}
 \begin{cases}
 \xi_t(t,x)+\lambda(t)\xi_x(t,x)=d \dot W(t),
     \quad t\in (0,+\infty),  x\in (0,1),\\
  \xi(0,x)=\rho_0(x)+ d W(0),  \quad x\in (0,1),\\
   \displaystyle
  \xi(t,0)=k\xi(t,1)
     + (k-1)\left(\frac{\ov\rho(\lambda(t)-1)}{\lambda(t)} -dW(t)   \right),
     \quad t\in (0,+\infty).
 \end{cases}
 \ee

Let, again, $V_2$ be defined by \eqref{def-V2}. Then the time derivative of $V_2$ satisfies:
 \begin{align*}
     \dot V_2(t)
   &=- \lambda(t)  \int_0^1 e^{-x} (\xi^2(t,x))_x dx
   + 2 d  \dot W(t)  \int_0^1 e^{-x} \xi(t,x) dx
    \\
   &=   - \lambda(t)  \int_0^1 e^{-x} \xi^2(t,x) dx
     -\lambda(t) \Big [ e^{-x} \xi^2(t,x) \Big]_{x=0}^{x=1}
     \\
   &\quad    +2 d \lambda(t) (\xi(t,0)-\xi(t,1))
    \int_0^1 e^{-x}  \xi(t,x) dx.
 \end{align*}
Hence
  \begin{align}
     \dot V_2(t)
   &=   - \lambda(t)  V_2(t)
      -\lambda(t) \left [e^{-1} \xi^2(t,1)
       - \left( k\xi(t,1) +(k-1)\Big(\frac{\ov\rho(\lambda(t)-1)}{\lambda(t)} -dW(t)
           \Big)\right)^2\right]\nonumber
      \\ \nonumber
   &\quad    +2 d (k-1)\lambda(t) \Big(\xi(t,1)
   +\frac{\ov\rho(\lambda(t)-1)}{\lambda(t)} -dW(t) \Big)
      \int_0^1 e^{-x}  \xi(t,x) dx
     \\
   &=   - \lambda(t)  V_2(t) +\lambda(t)   (k^2-e^{-1}) \xi^2(t,1)
 + o(1)W(t)\xi(t,1)
 + o(1)W^2(t)\nonumber
     \\
   &\quad    +2 d (k-1)\lambda(t) (\xi(t,1)
   +    o(1)W(t))
      \int_0^1 e^{-x}  \xi(t,x) dx.\label{estdotV2nonlinear-1}
  \end{align}
By the Cauchy-Schwarz inequality, \eqref{hold V2}, \eqref{Vt-sim} and \eqref{estdotV2nonlinear-1}, one has, for $A=A(\overline \rho,k)>0$
sufficiently large,

 \be \label{dot V2-NL}
     \dot V_2(t)
 \leq   (- 1+o(1))  V_2(t)    + A (1+ o(1))  \xi^2(t,1)
     +o(1) W^2(t).
  \ee
We still define $V$ by \eqref{def-V}. Combining \eqref{dot V1-NL} and \eqref{dot V2-NL}, one has
\be \label{dot V-NL}
 \begin{split}
       \dot V(t)
   &= \frac{2A}{1-k^2 } \dot V_1(t) +\dot V_2(t)
    \\
 &  \leq   (- 1+ o(1))  V_2(t)    + (-A+ o(1))  \xi^2(t,1)
     +o(1) W^2(t).
\end{split}
\ee
Consequently, there exists $\alpha=\alpha(\overline \rho,k)>0$ such that
\begin{gather}
\label{dotVestnonlinear}
\dot V (t) \leq  (- \alpha +o(1))V(t).
\end{gather}
This, together with \eqref{def-V} \eqref{Vt-sim}, \eqref{ass-vep} and \eqref{D},  concludes the proof of Theorem \ref{thm-stab} in the case $d\geq 1$.
\end{proof}

\section*{Acknowledgements}

This work was partially done when Zhiqiang Wang was a postdoctoral
fellow in Laboratoire Jacques-Louis Lions of Universit\'{e} Pierre
et Marie Curie. This paper also benefitted a lot from the
discussions during \emph{Trimestre IHP} on ``Control of Partial and
Differential Equations and Applications'' which took place at
Institute Henri Poincar\'{e} in 2010.

Jean-Michel Coron was supported by the ERC advanced grant 266907 (CPDENL) of the 7th Research Framework
Programme (FP7). Zhiqiang Wang was partially supported by
Shanghai Pujiang Program (No. 11PJ1401200), by the Natural Science Foundation of Shanghai
(No. 11ZR1402500) and by the ERC advanced grant 266907 (CPDENL).

\bibliographystyle{plain}
\bibliography{biblio}

\end{document}